\theoremstyle{definition}
\newtheorem{theorem}{Theorem}
\newtheorem{lemma}{Lemma}
\theoremstyle{definition}
\newtheorem{definition}{Definition}
\newtheorem{condition}{Condition}
\newtheorem{remark}{Remark}
\newtheorem{assumption}{\textit{Assumption}}
\newcommand{\zjwang}[1]{\ifthenelse{\boolean{showcomments}}
	{ \textcolor[rgb]{1,0,1}{(ZW:  #1)}}{}}
\newcommand{\jxwang}[1]{\ifthenelse{\boolean{showcomments}}
	{ \textcolor{red}{(FL:  #1)}}{}}
\begin{document}
	\title{ On the Equilibrium of a Class of Leader-Follower Games with Decision-Dependent Chance Constraints }
	
	\author{
		Jingxiang Wang, Zhaojian Wang, Bo Yang, Feng Liu, and Xinping Guan \IEEEmembership{Fellow,~IEEE}
		\thanks{This work was partially supported by the National Natural Science Foundation of China (62103265), the Cast of China Association for Science and Technology (YESS20220320). (\textit{Corresponding author: Zhaojian Wang})}
		\thanks{J. Wang, Z, Wang, B. Yang, and X. Guan are with the Key Laboratory of System Control, and Information Processing, Ministry of Education of China, the Department of Automation, Shanghai Jiao Tong University, Shanghai, China (e-mail: wjx6246006@sjtu.edu.cn, wangzhaojian@sjtu.edu.cn, bo.yang@sjtu.edu.cn, xpguan@sjtu.edu.cn).}
		\thanks{F. Liu is with the State Key Laboratory of Power System and the Department of Electrical Engineering, Tsinghua University, Beijing, 100084, China (e-mail: lfeng@tsinghua.edu.cn).}
	}

	\maketitle
	\begin{abstract}

	In this paper, we study the existence of equilibrium in a single-leader-multiple-follower game with decision-dependent chance constraints (DDCCs), where decision-dependent uncertainties (DDUs) exist in the constraints of followers.
    DDUs refer to the uncertainties impacted by the leader's strategy, while the leader cannot capture their exact probability distributions. 
    To address such problems, we first use decision-dependent ambiguity sets under moment information and Cantelli's inequality to transform DDCCs into second-order cone constraints. This simplifies the game model by eliminating the probability distributions. 
    We further prove that there exists at least one equilibrium point for this game by applying  Kakutani's fixed-point theorem. Finally, a numerical example is provided to show the impact of DDUs on the equilibrium of such game models.
\end{abstract}

\begin{IEEEkeywords}
Leader-follower game, decision-dependent chance constraints, decision-dependent uncertainties, Cantelli's inequality
\end{IEEEkeywords}

\section{Introduction}
 The single-leader-multiple-follower game is an extended form of the leader-follower game, 
 which is first introduced by Stackelberg \cite{stackelberg1952theory}.
 The single-leader-multiple-follower game is widely used in various disciplines, such as economy \cite{tharakunnel2009single}, energy \cite{8031035} and transportation \cite{XI2022}.
 In this game, once the leader selects an optimal strategy based on the predicted optimal responses of the followers, followers will develop their strategies accordingly to achieve Nash equilibrium among followers. Therefore, this game is also called Stackelberg-Nash game \cite{7956147}. 

In recent years, the presence of uncertainties in leader-follower games has become a significant focus in academic research.
The existing works are divided into three categories according to the treatment of uncertainties, i.e., the stochastic programming \cite{xu2005mpcc, demiguel2009stochastic}, the robust optimization \cite{aghassi2006robust, hu2013existence, wang2024existence} and the distributionally robust optimization \cite{liu2018distributionally, singh2017distributionally, fabiani2023distributionally}.  Stochastic programming aims to solve the equilibrium of the game under the condition that the probability distribution of uncertainty is known. This can be achieved through Monte Carlo simulation, sample average approximation, etc. In \cite{xu2005mpcc}, a stochastic leader-follower game model is proposed, where the demand function of the leader follows a continuous distribution. 
In addition, the implicit numerical integration method is used to estimate the expected demand of the leader, and the conditions for the existence of equilibrium in this game are also provided. In \cite{demiguel2009stochastic}, the above model is extended to the case of multiple leaders, and the unique extension of the equilibrium is proven.
Robust optimization requires players to optimize according to the worst values of uncertainties to reduce risk.
 Note that the robust optimization is distribution-free, i.e., the player has no information other than the range of uncertainties.
In \cite{aghassi2006robust}, the robust-Nash equilibrium, which combines the Nash equilibrium and robust optimization, is first proven.
This game is extended to the robust multi-leader-single-follower game in \cite{hu2013existence},  robust leader-follower game with l-type sets in \cite{wang2024existence}.
 The leader-follower game based on the distributionally robust optimization refers to players having incomplete probability distributions about the uncertainties. Players are required to select strategies based on the worst-case scenarios due to incomplete probability distributions.
  The existence of distributionally robust equilibrium is proved for Nash game and leader-follower game in \cite{liu2018distributionally}.  
The existence and characterization of mixed strategy Nash equilibrium in distributionally robust chance-constrained games are proven in \cite{singh2017distributionally}.
In \cite{fabiani2023distributionally}, the previous work is extended to the generalized Nash game with distributionally robust joint chance constraints, and the probability distribution of uncertainties is restricted to a Wasserstein Ball.

In the above works,  the ranges or probability distributions of uncertainties are not impacted by the strategies of leaders.
These uncertainties are referred to as decision-independent uncertainties (DIUs). In the real world, some uncertainties are impacted by strategies, i.e., decision-dependent uncertainties (DDUs).  
For example, the electricity demand response strategy in the current time period will change the uncertainty set of the deferred load in the next time period \cite{9983834}; current emission reduction strategies of greenhouse gas will change the probability distribution of emission reduction costs over the next period \cite{webster2012approximate};  uncertain charging demands for electric vehicles are associated with dynamic electricity pricing strategies \cite{10506707}. Therefore, it is necessary to consider DDUs in the leader-follower game. 
In \cite{zhang2022nash}, DDU is first introduced into the leader-follower game, and the existence of equilibrium is proven. In this game model, the possible value of DDU remains in an uncertainty set impacted by the leader's strategy. This extends the traditional static uncertainty set to the dynamic uncertainty set, which is parameterized in the leader's strategy.
This method essentially uses improved robust optimization to model DDU, where the ambiguity set is assumed to capture all possible values of DDU. 
However, when the extreme values of DDU have a low probability but deviate far from normal values, the ambiguity set will be very large. Then, the result may be very conservative under the worst-case scenario \cite[Page 5-7]{xie2024distributionally}.
To this end, another approach emerges, where the leader's strategy impacts the probability distribution of DDU. Although the real probability distribution is unknown, it belongs to an ambiguity set of probability distributions determined by the leader's strategy. This poses significant challenges for proving the existence of equilibrium due to the introduction of probability. 
To the best of our knowledge, there is a lack of attention focusing on the modeling and equilibrium of this game in the existing literature.

In this paper, we propose single-leader-multiple-follower games with decision-dependent chance constraints (DDCCs), where DDUs exist in the constraints of followers and the probability distributions are related to the leader's strategy.
We combine distributionally robust optimization and chance-constrained programming to model DDUs and prove the equilibrium of the game. The main contributions are summarized as follows. 
\begin{enumerate}
    \item Single-leader-multiple-follower games with DDCCs are formulated, where the probability distributions of uncertainties in the lower level are impacted by the leader's strategy. Then, the decision-dependent ambiguity sets are introduced to characterize the estimated probability distributions of DDUs. Finally, Cantelli's inequality is utilized to reformulate DDCCs as second-order cone constraints. This simplifies the game model by eliminating the probability distributions, paving the way for the equilibrium analysis.
    
    \item  The existence of the equilibrium of the single-leader-multiple-follower game with DDCCs is proved. 
    First, we construct the strategy spaces and optimal responses of the players as set-valued maps. Then we prove the properties of these set-valued maps including continuity, compactness, and convexity. In particular, different from existing literature \cite{zhang2022nash, nishimura2009robust, jia2015existence}, we prove that the strategy space of a follower is a continuous set-valued map instead of just assuming it. Finally, we prove the existence of the game equilibrium based on Kakutani's fixed-point theorem. 

\end{enumerate}

The remainder of this paper is organized as follows. Section \uppercase\expandafter{\romannumeral2} provides the notations and preliminaries. Section \uppercase\expandafter{\romannumeral3} introduces the framework for leader-follower games with DDCCs.
 Section \uppercase\expandafter{\romannumeral4} constructs set-valued maps for the leader-follower game and provides the equilibrium existence theorem with the proof. Section \uppercase\expandafter{\romannumeral5} provides an illustrative example of the equilibrium existence.
  The conclusion is given in Section \uppercase\expandafter{\romannumeral6}.



\section{Notations and Preliminaries}
This section provides the notations and preliminaries used in the rest of the paper.

\subsection{Notations}  

In this paper, the probability distribution of the random variable $x$ is denoted by $PD_x$.
The expectation of the random variable $x$ is denoted by $\mathbb{E}_{\mathbb{P}}\left[x\right]$. 
The probability measure of a random event holding is denoted by
$\mathbb{P}\left[\cdot\right]$.
The m-dimensional Euclidean space is denoted by $\mathbb{R}^m$. In the context of Euclidean space, uppercase letters, such as $X$, are utilized to denote sets. We use $X \times Y$ to denote the Cartesian product between sets $X$ and $Y$. For Euclidean space $\prod_{i=1}^I Y_i$ and $i \in\{1,2, \ldots, I\}$, a point in $Y_i$ is denoted by $y_i$,  $y_{-i} \triangleq\left(y_1, \ldots, y_{i-1}, y_{i+1}, \ldots, y_I\right)$ and $Y_{-i} \triangleq Y_{1} \times Y_{2} \times \cdots Y_{i-1} \times Y_{i+1} \times \cdots Y_{I}$.
Let  $G: Y \rightrightarrows X$ be a set-valued map from $Y$ to $X$, and the value of the set-valued map at $y$ is denoted by $G(y)$. For the Euclidean space $X$, a sequence in $X$ is denoted by $\left\{x_j\right\}$, and $j$ is the index of the sequence elements.

\subsection{Preliminaries}  
In this subsection, we introduce the definitions and the key lemmas for developing theoretical results in this paper.

\begin{definition} \label{marginal} (Definition of marginal functions and marginal maps \cite[Page 51]{jaubin1984differential})	
Let $G: Y \rightrightarrows X$ be a set-valued map and $f(x,y)$ be a real-valued function that is defined on $X \times Y$. 
The marginal function is defined as
\begin{equation}
	V(y) \triangleq \sup _x f(x, y), \text { s.t. } x \in G(y)
	\nonumber 
\end{equation}
The marginal map  $M: Y \rightrightarrows X$ is defined as
\begin{equation}
	M(y)\triangleq\{x \in G(y): f(x, y)=V(y)\}
	\nonumber 
\end{equation}
\end{definition}

Then we introduce the following continuity theorem on marginal functions and marginal maps.

\begin{lemma}\label{lemma2}
(Continuity theorem of marginal functions and marginal maps \cite[Page 51-53]{jaubin1984differential})	
For the marginal function $V(y)$ and the marginal map  $M: Y \rightrightarrows X$, which are defined in Definition \ref{marginal}, we have the following conclusion.

(\romannumeral 1 ) Suppose that $f(x,y)$ is lower semicontinuous on $X \times Y$, and the set-valued map $G$ is lower semicontinuous. Then the marginal function $ V(y) $ is lower semicontinuous.

(\romannumeral 2 )  Suppose that $f(x,y)$ is upper semicontinuous on $X \times Y$, and the set-valued map $G$ is upper semicontinuous  with compact values. Then the marginal function $	V(y) $ is upper semicontinuous.

(\romannumeral 3 )  Suppose that $f(x,y)$ is continuous on $X \times Y$, and the set-valued map $G$ is continuous  with compact values. Then the marginal map $	M(y) $ is upper semicontinuous.
\end{lemma}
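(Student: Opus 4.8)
Since the statement is a version of Berge's maximum theorem, the plan is to establish the three parts separately, each by a direct neighborhood argument (equivalently, since here $X,Y\subseteq\mathbb{R}^m$, a sequential argument). I would first recall the characterizations I will use: $G$ is lower semicontinuous at $y_0$ iff for every open $U$ with $U\cap G(y_0)\neq\emptyset$ there is a neighborhood of $y_0$ on which $G$ still meets $U$; $G$ is upper semicontinuous at $y_0$ with compact values iff for every open $O\supseteq G(y_0)$ there is a neighborhood of $y_0$ on which $G(y)\subseteq O$; and in the Euclidean setting the latter additionally implies that $G$ has closed graph and that $x_j\in G(y_j)$ with $y_j\to y_0$ forces $\{x_j\}$ to be relatively compact with all cluster points in $G(y_0)$.

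For (i), fix $y_0$ and any $\alpha<V(y_0)$, and choose $x_0\in G(y_0)$ with $f(x_0,y_0)>\alpha$. Lower semicontinuity of $f$ makes $\{(x,y):f(x,y)>\alpha\}$ open, so it contains a product neighborhood $U\times W$ of $(x_0,y_0)$; lower semicontinuity of $G$ then gives a neighborhood $N\subseteq W$ of $y_0$ on which $G(y)\cap U\neq\emptyset$. Picking $x\in G(y)\cap U$ yields $V(y)\ge f(x,y)>\alpha$ for all $y\in N$, hence $\liminf_{y\to y_0}V(y)\ge\alpha$; letting $\alpha\uparrow V(y_0)$ proves lower semicontinuity of $V$.

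For (ii), fix $y_0$ and $\varepsilon>0$; note $V(y_0)$ is finite and attained because $f(\cdot,y_0)$ is upper semicontinuous on the compact set $G(y_0)$. For each $x\in G(y_0)$, upper semicontinuity of $f$ gives a product neighborhood $U_x\times W_x$ of $(x,y_0)$ on which $f<V(y_0)+\varepsilon$; compactness of $G(y_0)$ extracts a finite subcover $U_{x_1},\dots,U_{x_n}$, and with $O=\bigcup_k U_{x_k}\supseteq G(y_0)$ and $W=\bigcap_k W_{x_k}$ a neighborhood of $y_0$, upper semicontinuity of $G$ provides a neighborhood $N\subseteq W$ with $G(y)\subseteq O$ for $y\in N$. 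Then every $x\in G(y)$ lies in some $U_{x_k}$ with $y\in W_{x_k}$, so $f(x,y)<V(y_0)+\varepsilon$, and taking the supremum gives $V(y)\le V(y_0)+\varepsilon$ on $N$, i.e. $V$ is upper semicontinuous.

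For (iii), parts (i) and (ii) together make $V$ continuous. Each $M(y)$ is a closed subset of the compact set $G(y)$, hence compact, and nonempty since the continuous $f(\cdot,y)$ attains its supremum on $G(y)$. Suppose $M$ fails to be upper semicontinuous at $y_0$: there is an open $O\supseteq M(y_0)$ and $y_j\to y_0$ with points $x_j\in M(y_j)\setminus O$. Since $G$ is upper semicontinuous with compact values, $\{x_j\}$ is relatively compact, so along a subsequence $x_{j_k}\to\bar x$, and $\bar x\in G(y_0)$ by the closed-graph property. Continuity of $f$ and of $V$ give $f(\bar x,y_0)=\lim_k f(x_{j_k},y_{j_k})=\lim_k V(y_{j_k})=V(y_0)$, so $\bar x\in M(y_0)\subseteq O$; but $x_{j_k}\in X\setminus O$ with $X\setminus O$ closed forces $\bar x\notin O$, a contradiction. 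I expect (iii) to be the delicate step, since it is where one must invoke compactness of the values of $G$ twice (to attain the supremum defining $M$ and to extract a convergent subsequence), the closed-graph consequence of upper semicontinuity, and the continuity of $V$ obtained from (i)--(ii); the first two parts are routine neighborhood chases.
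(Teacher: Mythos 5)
The paper does not prove this lemma: it is quoted from Aubin and Cellina as a known continuity theorem for marginal functions and marginal maps (a form of Berge's maximum theorem), so there is no in-paper argument to compare against. Your proof is a correct, self-contained reconstruction along the same lines as the cited source --- neighborhood/open-cover arguments for (i) and (ii), and for (iii) the combination of continuity of $V$ with the closed-graph and relative-compactness consequences of upper semicontinuity with compact values; the only implicit hypothesis is that $G$ takes nonempty values (needed for $V(y_0)$ to be attained in (ii) and for $M(y)\neq\emptyset$ in (iii)), which is the standing convention in the reference and holds in the paper's application.
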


\begin{lemma}\label{lemma1}
	(Cantelli's inequality \cite{cantelli1929sui})
Let $\xi$ be a random variable. The expectation and variance of the random variable $\xi$ is denoted by $\mathbb{E}_{\mathbb{P}}[\xi]$ and $\sigma^2$, respectively.  Then, for $\forall \lambda>0$, we have
	\begin{equation}
		\operatorname{\mathbb{P}}\left[\xi-\mathbb{E}_{\mathbb{P}}[\xi] \geq \lambda\right] \leq \frac{\sigma^2}{\sigma^2+\lambda^2}
		\nonumber 
	\end{equation}
\end{lemma}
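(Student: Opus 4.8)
The statement is the classical one-sided Chebyshev (Cantelli) bound, and the plan is to prove it by a shift-and-Markov argument. First I would recenter the variable: put $Y \triangleq \xi - \mathbb{E}_{\mathbb{P}}[\xi]$, so that $\mathbb{E}_{\mathbb{P}}[Y] = 0$ and $\mathbb{E}_{\mathbb{P}}[Y^2] = \sigma^2$, and the claim reduces to showing $\mathbb{P}[Y \ge \lambda] \le \sigma^2/(\sigma^2+\lambda^2)$ for every $\lambda > 0$.

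Next I would introduce a free parameter $a > 0$ and exploit the event inclusion $\{Y \ge \lambda\} \subseteq \{(Y+a)^2 \ge (\lambda+a)^2\}$, which holds because on $\{Y \ge \lambda\}$ we have $Y + a \ge \lambda + a > 0$ and squaring is increasing on the nonnegative reals. Applying Markov's inequality to the nonnegative random variable $(Y+a)^2$ then gives
\[
\mathbb{P}[Y \ge \lambda] \;\le\; \frac{\mathbb{E}_{\mathbb{P}}[(Y+a)^2]}{(\lambda+a)^2} \;=\; \frac{\sigma^2 + a^2}{(\lambda+a)^2},
\]
where the equality uses $\mathbb{E}_{\mathbb{P}}[Y] = 0$ to kill the cross term.

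Finally I would optimize the right-hand side over $a > 0$. A short computation (setting the derivative of $a \mapsto (\sigma^2+a^2)/(\lambda+a)^2$ to zero) identifies the minimizer $a^\star = \sigma^2/\lambda$; substituting it back and simplifying collapses the bound to exactly $\sigma^2/(\sigma^2+\lambda^2)$, which is the asserted inequality. Equivalently, one can skip the calculus and simply verify by direct substitution that the choice $a = \sigma^2/\lambda$ yields this value. There is essentially no hard part: the only step that is not purely mechanical is guessing the shift $a = \sigma^2/\lambda$, and the only thing to check carefully is that $\lambda + a > 0$ so that the squaring step and Markov's inequality are legitimate; everything else is elementary algebra.
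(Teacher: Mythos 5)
Your proof is correct. The paper does not prove this lemma at all --- it states Cantelli's inequality as a classical result with a citation to Cantelli (1929) and uses it as a black box in the proof of Theorem~1 --- so there is no in-paper argument to compare against. Your shift-and-Markov argument (recentering to $Y=\xi-\mathbb{E}_{\mathbb{P}}[\xi]$, bounding $\mathbb{P}[Y\geq\lambda]\leq(\sigma^{2}+a^{2})/(\lambda+a)^{2}$ via Markov's inequality applied to $(Y+a)^{2}$, and optimizing at $a^{\star}=\sigma^{2}/\lambda$) is the standard textbook proof, and every step checks out: the event inclusion is justified by $\lambda+a>0$, the cross term vanishes because $\mathbb{E}_{\mathbb{P}}[Y]=0$, and substituting $a=\sigma^{2}/\lambda$ indeed yields $\sigma^{2}/(\sigma^{2}+\lambda^{2})$. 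The only implicit hypothesis worth stating is that $\sigma^{2}<\infty$ (otherwise the bound is vacuous), and the degenerate case $\sigma^{2}=0$ is covered by taking the infimum over $a>0$ rather than a minimizer; neither point affects the validity of the argument.
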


\begin{lemma}\label{lemma3}
	(Kakutani's fixed-point theorem \cite{kakutani1941generalization})	
Let $X$ be a non-empty, compact and convex subset in Euclidean space $\mathbb{R}^n$. If set-valued map $K: X \rightrightarrows X$ is upper semicontinuous  with nonempty compact convex values, and there exists fixed point $x^* \in X$ such that $x^* \in K\left(x^*\right)$.
\end{lemma}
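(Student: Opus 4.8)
The plan is to obtain Lemma~\ref{lemma3} from Brouwer's fixed-point theorem through a limiting argument over increasingly fine triangulations; an alternative route would pass through Sperner's lemma (equivalently the KKM lemma), but the Brouwer-based argument is the most direct given the tools already in hand. First I would reduce to the case where $X$ is a simplex. Since $X$ is compact and convex in $\mathbb{R}^n$, fix a simplex $\Delta$ with $X\subseteq\Delta$; the nearest-point projection $r:\Delta\to X$ is well defined (closedness and convexity of $X$) and continuous, with $r|_X=\mathrm{id}_X$. The composite $K':=K\circ r:\Delta\rightrightarrows X\subseteq\Delta$ is upper semicontinuous and has nonempty compact convex values, so if one produces $x^*$ with $x^*\in K'(x^*)$, then $x^*\in X$ (the range of $K'$ lies in $X$), hence $r(x^*)=x^*$ and $x^*\in K(x^*)$. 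It therefore suffices to prove the statement for $X=\Delta$.

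For the simplex, take triangulations $\mathcal{T}_m$ of $\Delta$ with $\mathrm{mesh}(\mathcal{T}_m)\to 0$. For each vertex $v$ of $\mathcal{T}_m$ select some $g_m(v)\in K(v)$ (nonempty by hypothesis) and extend $g_m$ affinely on each cell, obtaining a continuous map $g_m:\Delta\to\Delta$; the range stays in $\Delta$ because $\Delta$ is convex and every vertex value lies in $\Delta$. By Brouwer's theorem, $g_m$ has a fixed point $x_m$. Writing $x_m$ inside its cell with vertices $v_0^m,\dots,v_n^m$ and barycentric coordinates $\lambda_0^m,\dots,\lambda_n^m$, affinity and the fixed-point property give $x_m=\sum_i\lambda_i^m v_i^m$ and $x_m=g_m(x_m)=\sum_i\lambda_i^m\,g_m(v_i^m)$, where $g_m(v_i^m)\in K(v_i^m)$.

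Now I would pass to the limit. By compactness of $\Delta$ and of the standard coordinate simplex, extract a subsequence along which $x_m\to x^*$, $\lambda_i^m\to\lambda_i\ge 0$ with $\sum_i\lambda_i=1$, and $g_m(v_i^m)\to y_i$ for each $i$; since $\mathrm{mesh}(\mathcal{T}_m)\to 0$, also $v_i^m\to x^*$. An upper semicontinuous set-valued map with compact values has closed graph, so $v_i^m\to x^*$, $g_m(v_i^m)\in K(v_i^m)$, $g_m(v_i^m)\to y_i$ together force $y_i\in K(x^*)$. Letting $m\to\infty$ in $x_m=\sum_i\lambda_i^m\,g_m(v_i^m)$ gives $x^*=\sum_i\lambda_i y_i$, a convex combination of points of $K(x^*)$, and convexity of $K(x^*)$ yields $x^*\in K(x^*)$.

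The step I expect to require the most care is the passage to the limit: one must verify that upper semicontinuity plus compact values indeed gives a closed graph, and must organize the bookkeeping so that the (at most $n+1$) shrinking cell-vertices, their selections, and their barycentric weights all converge along a single common subsequence. The retraction argument reducing a general compact convex body to a simplex is the other technical ingredient, but it is routine once the metric projection onto a compact convex set is invoked.
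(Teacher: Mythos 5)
The paper does not prove this lemma at all: it is stated as a classical result and dispatched with a citation to Kakutani's 1941 paper, so there is no in-paper argument to compare against. Your proof is correct, and it is essentially Kakutani's original argument (and the standard textbook one): reduce to a simplex via the metric projection, approximate the correspondence by continuous affine selections on triangulations of vanishing mesh, invoke Brouwer on each approximant, and pass to the limit using the fact that an upper semicontinuous map with compact values into a compact set has closed graph, finishing with convexity of $K(x^*)$. All the steps you flag as delicate do go through: the composite $K\circ r$ inherits upper semicontinuity because $r$ is continuous; the affine extensions agree on shared faces of adjacent cells, so each $g_m$ is genuinely continuous; and the finitely many vertex, weight, and selection sequences can be made to converge along one common subsequence by a finite diagonal extraction. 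The only cosmetic remark is that the closed-graph property needs the compactness of the ambient set (which you have), since upper semicontinuity with closed values alone does not give a closed graph in general topological settings; in $\Delta\subset\mathbb{R}^n$ this is not an issue.
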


\section{Game Model and DDCC Reformulation}

In this section, we first formulate a general single-leader-multi-follower game model. Different from the existing leader-follower game,  DDUs exist in the lower-level game, whose probability distributions depend on the decision variables in the upper-level game. Since DDUs exist in constraints, we describe these special constraints by chance-constrained programming, i.e., DDCCs.
Then, we transform these DDCCs into second-order cone constraints using the improved distributionally robust method.

\subsection{Game model} 
\begin{figure}[t]
	\centering
	\includegraphics[width=0.45  \textwidth]{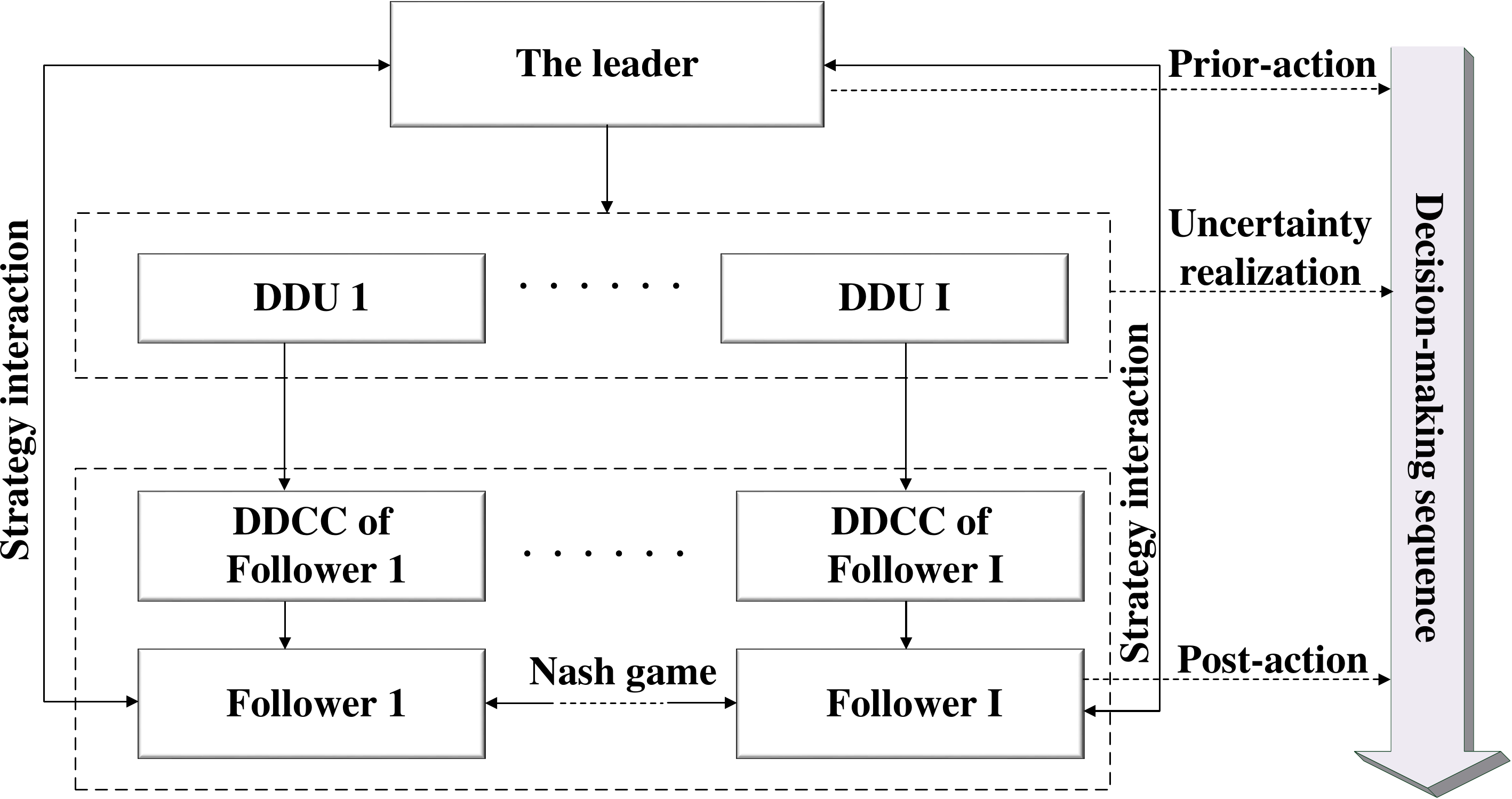}
	\caption{The framework of leader-follower game with DDCCs}
	\label{Framework}
\end{figure}
In this subsection, we formulate the single-leader-multi-follower game model with DDCCs.
Fig.\ref{Framework} provides the framework of the leader-follower game with DDCCs.
For this type of game model, the following four features need to be taken into account:

(\romannumeral 1 ) \textbf{Multi-level structure}: There is a single leader in the upper level and multiple followers in the lower level. Each player has their own strategy set as well as a payoff function.
Players are all selfish individuals, which means that this game is a non-cooperative game.
Due to the multi-level structure, the leader selects a strategy first and followers select strategies later.

(\romannumeral 2 ) \textbf{Asymmetric information}: 
The information between the upper level and lower level is asymmetric. The leader knows the payoff functions and strategy sets of followers. One follower can only observe the strategies of the leader and other followers.

(\romannumeral 3 ) \textbf{DDUs}:
There are DDUs in the constraints of followers. The leader only has part of the reference historical data of the uncertainties, i.e., the leader only has incomplete information about these uncertainties.
In addition, the probability distributions of DDUs are dependent on the strategy of the leader.

(\romannumeral 4 ) \textbf{Robustness}: The leader is risk-averse to DDUs. Therefore, the leader's best response is made in the worst probability distributions of the DDUs. 

Based on the above features, we construct the mathematical form of this game model as a multi-tuple, i.e.,
\begin{equation}
	\Gamma \triangleq\left\{x, X, f(x, \boldsymbol{y}), N, y_i, \Omega_i, \varphi_i\left(x, y_{-i}, y_{i}\right), \xi_i(x), D_i(x)\right\}
\end{equation}
Next, we will explain these elements in detail.

\begin{definition}
For the multi-tuple $\Gamma$, each element is defined as follows.

(\romannumeral 1 ) \textbf{Uncertainties}:  For any $ i \in N$,  $\xi_i(x)$ is the one-dimensional uncertainty variable. 
The exact probability distribution of DDU $\xi_i(x)$ is difficult to obtain. Furthermore, $\xi_i(x)$ depends on the strategy $x$ which is determined by the leader.
Therefore, we introduce a decision-dependent ambiguity set $D_i(x)$ to characterize the plausible probability distribution of DDU $\xi_i(x)$.  
$D_i(x)$ characterizes an implicit relationship between the probability distribution of the DDU $\xi_i(x)$ and the strategy $x$. We refrain from discussing the specific form of $D_i(x)$ here, as the construction process of $D_i(x)$ will be elaborated in Section \ref{ambiguity set}.

(\romannumeral 2 ) \textbf{Lower level}:  $N \triangleq\{1,2,3, \ldots, I\}$ is the set of followers. 
$y_{i} \in Y_i \subseteq \mathbb{R}$ is the strategy of the follower $i$.
The strategy space $\Omega_i \subseteq \mathbb{R}$ is defined as follows.
\begin{equation}
	\Omega_i=\left\{y_i \in Y_i \mid \xi_i(x) \cdot y_i \leq b_i\right\} 
\end{equation}
where $\xi_i(x)$ is the DDU, $b_i$ 
is a constant, and $x \in X \subseteq \mathbb{R}^m$ is the strategy of the leader. The lower-level game is defined as a Nash game. The payoff function of each follower is dependent on other player's strategies $x$ and $y_{-i}$, and each follower cannot determine the strategies $x$ and $y_{-i}$. The payoff function is defined as $\varphi_i: X \times Y_{-i} \times Y_{i} \rightarrow \mathbb{R}$. Follower $i$ wants to minimize the payoff function $ \varphi_i\left(x, y_{-i}, y_{i}\right)$. Therefore, the lower-level game is characterized as follows.
	\begin{align}
		& \min_{y_i} \varphi_i\left(x, y_{-i}, y_i\right) ,\  \text { s.t. } y_i \in \Omega_i
	\end{align}

(\romannumeral 3 ) \textbf{Upper level}: There is a single leader with strategy $x \in X$. The payoff function of the leader is defined as $f: X \times \prod_{i=1}^I Y_i \rightarrow \mathbb{R}$.
 $f(x, \boldsymbol{y})$ is single-valued where  $\boldsymbol{y} \triangleq\left(y_1, \ldots, y_i, \ldots, y_I\right) \in \prod_{i=1}^I Y_i\subseteq \mathbb{R}^I $ is the strategies of followers.
It is worth noting that the leader cannot determine the strategies $\boldsymbol{y}$. The leader wants to maximize its payoff function $f(x, \boldsymbol{y})$. 
In addition, since the leader is risk averse, we can use decision-dependent ambiguity set $D_i(x)$ as well as chance-constrained programming to describe the uncertainty constraints in $\Omega_i$.
The reconstructed constraint, i.e., DDCC, is reformulated as follows.
\begin{equation}\label{DDCC1}
\inf _{PD_{\xi_i(x)} \in D_i(x)} {\mathbb{P}}\left[
\xi_i(x) \cdot y_i \leq b_i
\right]\geq 1-\alpha_i 
\end{equation}
where $\alpha_i $ denotes the risk tolerance of constraint violation for the leader. Generally, $\alpha_i $ is very small, e.g., $\alpha_i=0.05$. 
Therefore, the upper-level game is characterized as follows.
\begin{subequations} \label{leader}
	\begin{align}
		& \max_x f(x, \boldsymbol{y}) \\
		& \text { s.t. } x \in X \\
         &\quad y_i \in \arg \min _{y_i} \Big\{ \varphi_i \big( x, y_{-i}, y_i \big) : \notag \\
		& \big\{  Y_i \mid \inf _{PD_{\xi_i(x)} \in D_i(x)} {\mathbb{P}}\left[
\xi_i(x) \cdot y_i \leq b_i
\right]\geq 1-\alpha_i \big\} \Big\} , \forall i \in N
	\end{align}
\end{subequations}
\end{definition}

\subsection{Definition of the equilibrium} 

In this subsection, we define the equilibrium of game $\Gamma$. First, we introduce Condition \ref{condition for equilibrium} which is the key to defining equilibrium.

\begin{condition} \label{condition for equilibrium}
For the given $\alpha_i \in [0,1]$, consider a point $(x^*,\boldsymbol{y}^*)$ with $x^* \in X$ and 

$y_i^* \in \left\{Y_i \mid \inf _{PD_{\xi_i(x^*)} \in D_i(x^*)} {\mathbb{P}}\left[
\xi_i(x^*) \cdot y_i \leq b_i
\right]\geq 1-\alpha_i \right\}$.

(a) For the leader, $f\left(x^*, \boldsymbol{y}^*\right) \geq f\left(x^1, \boldsymbol{y}^*\right)$ holds for any $x^1 \in X$.

(b) For each follower $i$, $\varphi_i\left(x^*, y_{-i}^*, y_i^*\right) \leq \varphi_i\left(x^*, y_{-i}^*, y_i^1\right)$ holds for any 

$y_i^1 \in \left\{Y_i \mid \inf _{PD_{\xi_i(x^*)} \in D_i(x^*)} {\mathbb{P}}\left[
\xi_i(x^*) \cdot y_i \leq b_i
\right]\geq 1-\alpha_i \right\}$.

\end{condition}

Based on Condition \ref{condition for equilibrium}, we define the equilibrium of game $\Gamma$. 
\begin{definition} \label{equilibrium}
	If $(x^*,\boldsymbol{y}^*)$ satisfies Condition \ref{condition for equilibrium} (a) and (b), then $(x^*,\boldsymbol{y}^*)$ is a equilibrium of game $\Gamma$. 
\end{definition}

In Definition \ref{equilibrium},
$x^*$ is the equilibrium strategy of the leader, which is determined under the worst probability distributions of DDUs $\xi_i(x^*)$ in $D_i(x^*)$. $\boldsymbol{y}^*$ is the equilibrium strategies of all followers, which is the Nash equilibrium of the lower-level game under strategy $x^*$. Therefore, the equilibrium of game $\Gamma$ is an extension of the Stackelberg-Nash equilibrium \cite{7956147}.

The introduction of DDU brings two challenges to analyzing the existence of the equilibrium: 1) the form of the ambiguity set $D_i(x)$ is unknown; 2) since the leader is risk averse, the DDCC under the worst probability distribution of DDU needs to be addressed. Next, we will focus on these challenges.


\subsection{Construction of the ambiguity set} \label{ambiguity set}

In this subsection, we use the moment information to construct the decision-dependent ambiguity set $D_i(x)$. First, we give the finite support assumption as follows.

\begin{assumption} \label{assumption0}
For any $x \in X$, ${PD_{{\xi}_i(x)} \in D_i(x)}$ has a common decision-independent finite support $\Big\{\widetilde{\xi}_i^k\Big\}_{k=1}^K$,
where $k$ is the index of the set,  $K$ is the size of the set, and $\widetilde{\xi}_i^k$ is the sampled value. 
\end{assumption}

Assumption \ref{assumption0} will be used throughout the rest of the paper unless otherwise stated. 
This assumption is standard in the ambiguity set construction for DDU \cite{luo2020distributionally, yu2022multistage}, which ensures that  
\begin{align*}
\mathbb{P}\left[\xi_i(x) \in\left\{\widetilde{\xi}_i^k\right\}_{k=1}^K\right]=1
	\end{align*}
Based on Assumption \ref{assumption0}, we can assume that the leader has a set of reference historical data samples $\Big\{\widetilde{\xi}_i^k\Big\}_{k=1}^K$for DDU $\xi_i(x)$. Then, we give the mean and variance of this sample set as follows.
\begin{align}
	\mu_i &=\frac{1}{K} \sum\nolimits_{k=1}^K \widetilde{\xi}_i^k\\
	\Sigma_i&=\frac{1}{K} \sum\nolimits_{k=1}^K\left(\widetilde{\xi}_i^k-\mu_i\right)^2
	\end{align}
If $K$ is large enough,  $\mu_i$ and $\Sigma_i$ will be equal to the true mean and variance of $\xi_i(x)$.
A natural idea is that the probability distribution in an ambiguity set will match the mean $\mu_i$ and variance $\Sigma_i$ in the set $\Big\{\widetilde{\xi}_i^k\Big\}_{k=1}^K$. Denote the ambiguity set by $D_{i,1}$, which is defined as follows.
\begin{equation}
	\scalebox{1}{$
	D_{i,1}=\left\{\begin{array}{ll} 
			& \mathbb{E}_{\mathbb{P}}\left[\xi_i(x)\right]=\mu_i, \\
			PD_{{\xi}_i(x)} \in \mathcal{P}\left(\mathbb{R}\right): & \\
			& \mathbb{E}_{\mathbb{P}}\left[\left(\xi_i(x)-\mu_i\right)^2\right]=\Sigma_i
		\end{array}\right\}
		$}
\end{equation}
where $\mathcal{P}\left(\mathbb{R}\right)$ denotes the set of all probability distributions on $\mathbb{R}$. 
However, $K$ is limited in practical scenarios, 
and the probability distribution of DDU $\xi_i(x)$ will change with strategy $x$.
These factors cause deviations in the mean $\mu_i$ and variance $\Sigma_i$ from the true probability distribution.
 At this point, the standard ambiguity set $\mathcal{D}_{i,1}$ is no longer suitable for the game model. 
 Inspired by the second-order moment ambiguity set \cite{delage2010distributionally}, we improve the ambiguity set $\mathcal{D}_{i,1}$ to $D_i(x)$. Specifically, we aim to use the estimated moment information from $\mathcal{D}_{i,1}$ to envelop the true variance and mean in $D_i(x)$. Based on this idea, we define  $D_i(x)$ as follows.
\begin{equation}\label{Di}
	\scalebox{0.94}{$
D_i(x)=\left\{\begin{array}{ll} 
			& \left(\mathbb{E}_{\mathbb{P}}\left[\xi_i(x)\right]-\mu_i\left(x\right)\right)^2 \leq \gamma_{i,1} \Sigma_i, \\ PD_{{\xi}_i(x)} \in \mathcal{P}\left(\mathbb{R}\right): & \\
			& \mathbb{E}_{\mathbb{P}}\left[\left(\xi_i(x)-\mu_i\left(x\right)\right)^2\right] \leq \gamma_{i,2} \Sigma_i
		\end{array}\right\}
		$}
\end{equation}

In the decision-dependent ambiguity set $D_i(x)$, $\gamma_{i,1}>0$ and $\gamma_{i,2}>1$ are two given parameters and the estimated mean $\mu_i\left(x\right)$ depends on the strategy $x$. The first constraint ensures that the true mean lies
in a circle centered on the estimated mean $\mu_i\left(x\right)$. The second constraint ensures that the true variance is limited to 
$\gamma_{i,2} \Sigma_i-\left(\mathbb{E}_{\mathbb{P}}\left[\xi_i(x)\right]-\mu_i(x)\right)^2$. 
It is worth noting that $\mathbb{E}_{\mathbb{P}}\left[\left(\xi_i(x)-\mu_i(x)\right)^2\right]=\mathbb{E}_{\mathbb{P}}\left[\left(\xi_i(x)-\mathbb{E}_{\mathbb{P}}\left(\xi_i(x)\right)\right)^2\right]+\left(\mathbb{E}_{\mathbb{P}}\left[\xi_i(x)\right]-\mu_i(x)\right)^2$.
In fact, $\gamma_{i,1}$ and $\gamma_{i,2}$  are parameters of a risk measurement method. They measure the leader's confidence in the estimated mean $\mu_i\left(x\right)$ and the estimated variance $\Sigma_i$, respectively \cite{delage2010distributionally}. We consider the impact of strategy $x$ on the probability distribution of the DDU $\xi_i(x)$, which is mainly reflected in the estimated mean $\mu_i\left(x\right)$. Therefore, $D_i(x)$ is a method for constructing a statistical distance between the probability distribution of DDU $\xi_i(x)$ and the reference historical data sample $\Big\{\widetilde{\xi}_i^k\Big\}_{k=1}^K$.
\subsection{DDCC reformulation} 

Although we have completed the construction of the ambiguity set $D_i(x)$,  the DDCC under the worst probability distribution of DDU still needs to be addressed.
In this subsection, we reformulate DDCC \eqref{DDCC1} with the decision-dependent ambiguity set $D_i(x)$ to the second-order cone form, and the reformulation is carried out under the worst probability distribution.
We first present the reformulation result as follows.
\begin{theorem}
	\label{theorem1}
 The DDCC \eqref{DDCC1}  with the decision-dependent ambiguity set $D_i(x)$ is equivalent to 
	\begin{equation} \label{eu1}
		\mu_i(x)y_i+l_i \sqrt{\Sigma_i y_i^2} \leq b_i
	\end{equation}
	where 
	\begin{align*}
		l_i=\begin{cases}\sqrt{\gamma_{i,1}}+\sqrt{\left(\frac{1-\alpha_i}{\alpha_i}\right)\left(\gamma_{i,2}-\gamma_{i,1}\right)},
			& \text { when } \gamma_{i,1} / \gamma_{i,2} \leq \alpha_i \\ 
			\sqrt{\frac{\gamma_{i,2}}{\alpha_i}},
			& \text { when } \gamma_{i,1} / \gamma_{i,2}>\alpha_i \end{cases}
	\end{align*}
\end{theorem}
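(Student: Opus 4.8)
The plan is to reformulate the worst-case chance constraint \eqref{DDCC1} in two stages: first dualize the inner distributionally robust probability bound using Cantelli's inequality (Lemma \ref{lemma1}) to eliminate the supremum over $PD_{\xi_i(x)}\in D_i(x)$, and then optimize over the moment parameters that $D_i(x)$ leaves free. Throughout, fix $i\in N$ and write $\xi=\xi_i(x)$, $m=\mathbb{E}_{\mathbb{P}}[\xi]$, $v=\mathbb{E}_{\mathbb{P}}[(\xi-m)^2]$ for the true mean and variance of a candidate distribution. Note that $\xi_i(x)\cdot y_i\le b_i$ fails iff $\xi_i(x)y_i-b_i>0$; I will treat the case $y_i\ge 0$ (so that $\xi y_i$ is an increasing affine image of $\xi$ with mean $my_i$ and variance $vy_i^2$) and observe that $y_i<0$ is handled symmetrically, while $y_i=0$ is trivial since $b_i$ should be taken positive for feasibility.

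First I would apply Cantelli's inequality to the random variable $\xi y_i$ with $\lambda = b_i - m y_i>0$: this gives $\mathbb{P}[\xi y_i > b_i]\le \tfrac{vy_i^2}{vy_i^2+(b_i-my_i)^2}$, hence $\mathbb{P}[\xi y_i\le b_i]\ge 1-\tfrac{vy_i^2}{vy_i^2+(b_i-my_i)^2}$. Since Cantelli's bound is tight over all distributions with prescribed mean and variance (the worst-case two-point distribution attains it), the inner infimum over $PD_{\xi_i(x)}$ with \emph{fixed} moments $(m,v)$ equals exactly $1-\tfrac{vy_i^2}{vy_i^2+(b_i-my_i)^2}$, provided $b_i-my_i\ge 0$. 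Requiring this to be $\ge 1-\alpha_i$ rearranges to $\sqrt{v}\,|y_i|\le \sqrt{(1-\alpha_i)/\alpha_i}\,(b_i-my_i)$, equivalently $my_i+\sqrt{(1-\alpha_i)/\alpha_i}\,\sqrt{v y_i^2}\le b_i$. (One must also ensure the sign condition $b_i-my_i\ge0$ is implied; since $\sqrt{(1-\alpha_i)/\alpha_i}\sqrt{vy_i^2}\ge0$, the displayed inequality forces it automatically.)

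The second stage is where the real work lies: the constraint \eqref{DDCC1} must hold for \emph{every} $PD_{\xi_i(x)}\in D_i(x)$, so I need the displayed bound to hold at the worst $(m,v)$ allowed by $D_i(x)$. From \eqref{Di}, writing $\delta = m-\mu_i(x)$, the ambiguity set imposes $\delta^2\le\gamma_{i,1}\Sigma_i$ and $v+\delta^2\le\gamma_{i,2}\Sigma_i$ (using the variance-decomposition identity noted in the text). Substituting $m=\mu_i(x)+\delta$, the left-hand side of the chance constraint becomes $\mu_i(x)y_i+\delta y_i+\sqrt{(1-\alpha_i)/\alpha_i}\sqrt{vy_i^2}$, and I must maximize $g(\delta,v):=\delta y_i+\sqrt{(1-\alpha_i)/\alpha_i}\,|y_i|\sqrt{v}$ over the feasible $(\delta,v)$. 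For fixed $\delta$, $v$ is pushed to its upper bound $v=\gamma_{i,2}\Sigma_i-\delta^2$, reducing the problem to maximizing $h(\delta)=|\delta||y_i|+\sqrt{(1-\alpha_i)/\alpha_i}\,|y_i|\sqrt{\gamma_{i,2}\Sigma_i-\delta^2}$ over $|\delta|\le\sqrt{\gamma_{i,1}\Sigma_i}$ — a one-dimensional maximization of a concave-looking function of $|\delta|$ on an interval. Setting $t=|\delta|/\sqrt{\Sigma_i}\in[0,\sqrt{\gamma_{i,1}}]$, this is $|y_i|\sqrt{\Sigma_i}\cdot\big(t+\sqrt{(1-\alpha_i)/\alpha_i}\sqrt{\gamma_{i,2}-t^2}\big)$; a Lagrange/derivative computation gives an interior stationary point at $t^\star=\sqrt{\gamma_{i,2}}\cdot\sqrt{\alpha_i}$ (equivalently $t^{\star 2}=\alpha_i\gamma_{i,2}$), yielding value $\sqrt{\gamma_{i,2}/\alpha_i}$, but this is admissible only when $t^\star\le\sqrt{\gamma_{i,1}}$, i.e. $\gamma_{i,1}/\gamma_{i,2}\ge\alpha_i$; otherwise the maximum is at the boundary $t=\sqrt{\gamma_{i,1}}$, giving $\sqrt{\gamma_{i,1}}+\sqrt{(1-\alpha_i)/\alpha_i}\sqrt{\gamma_{i,2}-\gamma_{i,1}}$. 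This produces exactly the two branches of $l_i$, and the constraint becomes $\mu_i(x)y_i+l_i\sqrt{\Sigma_i y_i^2}\le b_i$.

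\textbf{Main obstacle.} The delicate point is the interchange and tightness argument in the inner problem: I must argue that taking the worst distribution and the worst moments can be done \emph{jointly} — that is, $\inf_{PD\in D_i(x)}\mathbb{P}[\xi y_i\le b_i] = \min_{(m,v)\in\mathcal{M}} \big(1-\tfrac{vy_i^2}{vy_i^2+(b_i-my_i)^2}\big)$ where $\mathcal{M}$ is the moment region cut out by \eqref{Di}. This requires (i) Cantelli's bound being \emph{attained} within the class of distributions supported on (or approximable within) the finite support of Assumption \ref{assumption0}, so that no feasible distribution does better than the moment bound, and (ii) handling the sign constraint $b_i-my_i\ge0$ carefully when $m$ ranges over the ambiguity set — if some feasible $m$ violates it, Cantelli does not apply directly and one must check the constraint is simply infeasible there, consistent with the reformulation. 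I would also need to confirm the claimed stationary point is a maximum (second-order check or concavity of $t\mapsto t+c\sqrt{\gamma_{i,2}-t^2}$ on $[0,\sqrt{\gamma_{i,2}}]$) and that the boundary case analysis is exhaustive; these are routine once the structure above is in place.
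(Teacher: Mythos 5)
Your proposal is correct and follows essentially the same route as the paper's proof: both decompose \eqref{DDCC1} into an inner problem (worst-case probability over distributions with fixed mean and variance, solved tightly via Cantelli's inequality, Lemma \ref{lemma1}) and an outer problem (worst-case moments over the region cut out by $D_i(x)$), with the case split on $\gamma_{i,1}/\gamma_{i,2}$ versus $\alpha_i$ arising from whether the stationary point of the outer maximization lies inside the admissible interval. Your explicit derivative computation for the outer problem, and your remark on the attainability of Cantelli's bound within the finite support of Assumption \ref{assumption0}, merely make explicit steps the paper states without detail.
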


The proof of Theorem \ref{theorem1} is provided in Appendix \ref{section_appendix_theorem1}. We introduce auxiliary variables 
\begin{subequations} \label{auxiliary}
\begin{align} \label{auxiliary1}
		\widetilde{s}_i&=\xi_i(x)-\mu_i(x),\\
	\label{auxiliary2}	\widetilde{\beta}_i&=\widetilde{s}_iy_i,\\
 \label{auxiliary3}
		c_i&=b_i-\mu_i(x)y_i
	\end{align} 
 \end{subequations}
 to decompose DDCC \eqref{DDCC1} into an inner-outer two-level problem. We further use Lemma \ref{lemma1} to solve the worst probability boundary. Finally, we obtain the second-order cone form \eqref{eu1} of DDCC \eqref{DDCC1}, which means that strategy space ${\Omega}_i$ has changed. We provide the reformulated form of the game $\Gamma$ as follows.
\begin{subequations} \label{follower2}
	\begin{align}
		\label{l11}
		& \max _x f(x, \boldsymbol{y}) \\
		\label{l12}
		& \text { s.t. } x \in X \\
		\label{l13}
		& \quad \boldsymbol{y} \triangleq \big( y_1, \ldots, y_i, \ldots, y_I \big) \\
		& \quad y_i \in \arg \min _{y_i} \Big\{ \varphi_i \big( x, y_{-i}, y_i \big) : \notag \\
		\label{follower22}
		& \qquad \widehat{\Omega}_i = \big\{ y_i \in Y_i \mid \mu_i(x) y_i + l_i \sqrt{\Sigma_i y_i^2} \leq b_i \big\} \Big\}, \forall i \in N
	\end{align}
\end{subequations}

It is worth noting that game model  $\Gamma$ \eqref{follower2} is obtained under the worst probability distributions of DDUs, which satisfies the risk aversion of the leader. Different from the game model with DIUs
\cite{aghassi2006robust, hu2013existence, wang2024existence, liu2018distributionally, singh2017distributionally, fabiani2023distributionally},
 the strategy space $\widehat{\Omega}_i$ of $y_i$ is impacted by strategy $x$, which presents a challenge to the existence of equilibrium in game $\Gamma$.
\begin{remark}[DDCC] DDCC is an important branch of the DDU problem. There already exists some research on DDCC in recent works \cite{ 10252038, basciftci2021distributionally,10038580}. 
In fact, this form of uncertainty constraint is common in practical problems, e.g.,
uncertainties in the management of hybrid energy \cite{10252038},
uncertainties in the facility location \cite{basciftci2021distributionally}, and uncertainties in the risk management \cite{10038580}. Therefore, when the finite support assumption (Assumption \ref{assumption0}) holds, our method provides a generalized solution to these problems.
\end{remark}

\section{Existence of Equilibrium for Leader-Follower Games with DDCCs}

To prove the existence of the equilibrium point in game $\Gamma$, we first construct the game model as multiple set-valued maps and provide the key assumptions.
We then prove some properties of set-valued maps, including convexity, compactness, and continuity. 
Finally, we introduce Kakutani's fixed-point theorem to prove the existence of the equilibrium point.

\subsection{Construction of set-valued maps} 

In this subsection, we mainly construct set-valued maps in the game $\Gamma$, which will be used to analyze the existence of fixed points. 
Before constructing set-valued maps for the game $\Gamma$, we analyze the strategy spaces $\widehat{\Omega}_i$ of followers in \eqref{follower2}. 
We have known that $\widehat{\Omega}_i$ is changing with $x$ since $\mu_i\left(x\right)$ is a real-valued function with respect to $x$. Then we can construct  $\widehat{\Omega}_i$ as a set-valued map $S_{F,i}$:$X\rightrightarrows Y_{i}$ with the value $S_{F, i}(x)$, i.e.,
\begin{equation} \label{follower3}
	\begin{aligned}
		S_{F, i}(x)  \triangleq\left\{y_i \in Y_{i} \mid 	\mu_i\left(x\right)y_i+l_i \sqrt{\Sigma_i {y_i}^2} \leq b_i\right\}
	\end{aligned}
\end{equation}

Denote by $H_i$: $X \times Y_{-i} \rightrightarrows Y_{i}$ the best response of follower $i$,
and construct the set-valued map value $H_i\left(x, y_{-i}\right)$ as follows.
\begin{equation}\label{Hi1}
	\begin{aligned}
		& H_i\left(x, y_{-i}\right) \triangleq\left\{y_i \in S_{F, i}(x) \mid \right. \\
		& \left.\varphi_i\left(x, y_{-i}, y_i\right) \leq \varphi_i\left(x, y_{-i}, y_i^1\right), \forall y_i^1 \in S_{F, i}(x)\right\}
	\end{aligned}
\end{equation}

The lower-level game \eqref{follower22} is a Nash game.
Denote by  $B$: $X \times \prod_{i=1}^I Y_{i} \rightrightarrows \prod_{i=1}^I Y_{i}$ the best response of all followers, and construct the set-valued map value $B(x,\boldsymbol{y})$ as follows.
\begin{equation}\label{B1}
	B(x,\boldsymbol{y}) \triangleq \prod\nolimits_{i=1}^I H_i\left(x,y_{-i}\right)
\end{equation}

For upper-level game \eqref{l11}-\eqref{l13}, we construct the best response set of the leader as follows.
\begin{equation}\label{K1}
	\begin{aligned}
		K=  \{x \in X&\mid x \in \arg \max _x \max _{\boldsymbol{y}} f(x, \boldsymbol{y}) \\
		& \qquad\qquad\text { s.t. } x \in X, \boldsymbol{y} \in B(x, \boldsymbol{y})\}
	\end{aligned}
\end{equation}

Since the leader determines strategy $x$ before followers, $\boldsymbol{y}$ depends on $x$, and $K$ does not depend on $\boldsymbol{y}$. Based on Definition \ref{marginal},  we construct the marginal function as follows.
\begin{equation}\label{VX}
	V(x)=\max _{\boldsymbol{y} \in B(x, \boldsymbol{y})} f(x, \boldsymbol{y})
\end{equation}

Then, the best response set $K$ of the leader is as follows.
\begin{equation}\label{K2}
	K=\left\{x \in X \mid x \in \arg \max _x V(x) \text { s.t. } x \in X\right\}
\end{equation}

\subsection{Existence of the game equilibrium} 
In this subsection, we first give two assumptions which are sufficient conditions for the existence of equilibrium in game $\Gamma$.
\begin{assumption} \label{assumption1}
	For the upper-level game \eqref{l11}-\eqref{l13}:
	
	(\romannumeral 1 ) The payoff function $f(x, \boldsymbol{y})$ is continuous  with respect to $(x, \boldsymbol{y}) \in X \times \prod_{i=1}
^I Y_i$.
	
	(\romannumeral 2 ) The leader's strategy space $X$ is a non-empty, compact set.
\end{assumption}

\begin{assumption} \label{assumption2}
	For the lower-level game \eqref{follower22}:
	
	(\romannumeral 1 ) For any $x \in X$ and 
	$y_{-i} \in Y_{-i}$, the payoff function $\varphi_i\left(x, y_{-i}, y_i\right)$ of the follower $i$ is convex.  The payoff function is continuous with respect to $ \left(x, y_{-i}, y_i\right) \in X \times \prod_{i=1}^I Y_i$.
	
	(\romannumeral 2 ) For any $ i \in N$, $\mu_i\left(x\right)$ is continuous with respect to $x \in X$, and the constant $b_i>0$. 
	
	(\romannumeral 3 ) For any $ i \in N$, $Y_i$ is a non-empty, compact, and convex set that contains the origin. 
\end{assumption}
Assumption \ref{assumption1} and Assumption \ref{assumption2} (\romannumeral 1 ), (\romannumeral 2 ) are standard for the equilibrium proof \cite{9424958, 8998158, 10493142}. Assumption \ref{assumption2} (\romannumeral 3 ) 
is widely used in research on endogenous constraints, 
which is the crucial condition
for proving the lower semicontinuity of $S_{F,i}$ \cite[Page 343]{kreps2013microeconomic}.
Then, we have the following lemmas for game $\Gamma$.

 \begin{lemma} \label{lemmaSF}
	Suppose Assumptions \ref{assumption1} and \ref{assumption2} hold, for any $i \in N$, the set-valued map $S_{F,i}$ defined in \eqref{follower3} has the following properties.
 
	(\romannumeral 1 ) The set-valued map $S_{F,i}$ is continuous with respect to  $x \in X$;	
 
	(\romannumeral 2 ) For any $x \in X$ , $S_{F, i}(x)$ is a  compact and convex set.
\end{lemma}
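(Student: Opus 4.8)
The strategy is to handle the two parts separately, with convexity and compactness (part~(ii)) being essentially routine and the continuity claim (part~(i)) being the real work. I would first establish part~(ii). Fix $x \in X$. The set $S_{F,i}(x)$ is the intersection of the compact convex set $Y_i$ with the sublevel set $\{y_i \in \mathbb{R} : \mu_i(x)y_i + l_i\sqrt{\Sigma_i y_i^2} \le b_i\}$. Since $\sqrt{\Sigma_i y_i^2} = \sqrt{\Sigma_i}\,|y_i|$ is convex in $y_i$ and $\mu_i(x)y_i$ is linear, the left-hand side is a convex function of $y_i$; hence its sublevel set is convex and closed. Intersecting with the compact convex $Y_i$ gives a compact convex set, so part~(ii) follows. (One may note $0 \in S_{F,i}(x)$ because $b_i>0$ by Assumption~\ref{assumption2}(ii) and the origin lies in $Y_i$, so the set is also nonempty.)

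For part~(i) I would prove upper and lower semicontinuity of $S_{F,i}$ at an arbitrary $x \in X$. Upper semicontinuity is the easier direction: the defining inequality $g_i(x,y_i) := \mu_i(x)y_i + l_i\sqrt{\Sigma_i}\,|y_i| - b_i \le 0$ has $g_i$ jointly continuous in $(x,y_i)$ (using continuity of $\mu_i$ from Assumption~\ref{assumption2}(ii) and continuity of the absolute value), and $Y_i$ is a fixed compact set; so $S_{F,i}$ has a closed graph and takes values in the compact set $Y_i$, which together give upper semicontinuity. The hard part is lower semicontinuity. Here the obstacle is that the constraint is an inequality that may be active, and the feasible set could in principle "jump" as $x$ varies — this is exactly the subtlety the paper emphasizes it does not merely assume. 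The key structural fact I would exploit is that $0 \in S_{F,i}(x)$ for every $x$, and more strongly that the strict inequality $g_i(x,0) = -b_i < 0$ holds uniformly. This is a Slater-type condition, and it lets me run the standard argument: given $x_j \to x$ and any $\bar y_i \in S_{F,i}(x)$, I construct a recovering sequence by taking convex combinations $y_i^j = (1-t_j)\bar y_i + t_j\cdot 0 = (1-t_j)\bar y_i$ with $t_j \downarrow 0$ chosen slowly enough; by convexity of $g_i(x,\cdot)$ and $g_i(x,0) < 0$, the point $(1-t_j)\bar y_i$ satisfies $g_i(x, y_i^j) < 0$ strictly for $t_j > 0$, and by continuity of $g_i$ and $\mu_i$ this strict feasibility persists at $x_j$ for $j$ large; since $Y_i$ is convex and contains $0$ and $\bar y_i$, we also have $y_i^j \in Y_i$. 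Then $y_i^j \to \bar y_i$, giving lower semicontinuity. This is precisely where Assumption~\ref{assumption2}(iii) (convexity of $Y_i$ and $0 \in Y_i$) and $b_i > 0$ are used, matching the citation to \cite[Page 343]{kreps2013microeconomic}.

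Combining the closed-graph/upper-semicontinuity argument with the Slater-based lower-semicontinuity argument yields continuity of $S_{F,i}$, completing part~(i). The main obstacle, as noted, is the lower semicontinuity: without the uniform strict feasibility of the origin one cannot in general recover interior approximating points, and an inequality constraint that becomes active along a sequence can destroy lower semicontinuity; the convexity of $g_i(x,\cdot)$ together with $0$ being a uniform Slater point is what rescues the argument. I would present the recovering-sequence construction carefully but would not belabor the elementary continuity estimates.
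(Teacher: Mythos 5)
Your proposal is correct and follows essentially the same route as the paper: upper semicontinuity via the closed-graph argument combined with values lying in the compact set $Y_i$, and lower semicontinuity via a Slater-point recovering-sequence construction (the paper posits a strictly feasible $\hat{y}_i$ whose existence follows from $0\in Y_i$ and $b_i>0$, which is exactly your choice $\hat{y}_i=0$, and builds the same convex combinations $\frac{M}{M+1}y_i+\frac{1}{M+1}\hat{y}_i$ with a diagonal selection of indices). The compactness and convexity arguments also coincide with the paper's.
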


\begin{proof}
	
		\item  	(\romannumeral 1 ) \textbf{Continuity}: We first prove that $S_{F,i}$ is upper semicontinuous. Let us recall the fact that $S_{F, i}$ is upper semicontinuous at $x$, whenever $\left\{x_j\right\}$ in $X$ with the limit $x \in X$, and $\left\{y_{i,j}\right\}$ is a sequence in $Y_i$ such that $y_{i,j} \in S_{F,i}(x_j)$ for all $j$, then $\lim _{j \rightarrow \infty} y_{i, j} \in  S_{F,i}(x)$ \cite[Page 471]{kreps2013microeconomic}.
  
		We can set a sequence $\left\{\left(x_j, y_{i, j}\right)\right\}$ in $X \times Y_{i}$ with $y_{i,j} \in S_{F,i}(x_j)$. Let the limit of $\left\{\left(x_j, y_{i, j}\right)\right\}$ be $(x,y_i)$.
        Based on the continuity of  $\mu_i\left(x\right)$ in Assumption \ref{assumption2} (\romannumeral 2 ),  we have
        \begin{equation}\label{xn2}
        	\lim _{j \rightarrow \infty} \mu_i\left(x_j\right) y_{i, j}+l_i \sqrt{\Sigma_i}\left|y_{i, j}\right| = \mu_i(x) y_i+l_i \sqrt{\Sigma_i}\left|y_i\right|
        \end{equation}
       Since $y_{i,j} \in S_{F,i}(x_j)$, we have $\mu_i(x) y_i+l_i \sqrt{\Sigma_i}\left|y_i\right| \leq b_i$, which means that $y_{i} \in S_{F,i}(x)$. Therefore, $S_{F,i}$ is upper semi-continuous for any  $x \in X$.
 
 Next, we prove that $S_{F,i}$ is lower semicontinuous. Let us recall the fact that $S_{F, i}$ is lower semicontinuous at $x$, if for any $\left\{x_j\right\}$ in $X$ with the limit $x \in X$, $j>J$ with some sufficiently large $J$, and $y_{i} \in S_{F,i}(x)$, we can find a sequence 
$\left\{y_{i,j}\right\}$ with $y_{i,j} \in S_{F,i}(x_j)$ such that $\left\{y_{i,j}\right\} \rightarrow y_i$.  \cite[Page 472]{kreps2013microeconomic}. 

 Based on Assumption \ref{assumption2} (\romannumeral 3 ), $Y_i$ is a non-empty, compact and convex set which contains the origin.  Then we can conclude that for any $ x \in X$, $\exists \hat{y}_i \in Y_i$ such that $\mu_i\left(x\right)\hat{y}_i+l_i \sqrt{\Sigma_i}\left|\hat{y}_i\right| < b_i$ holds strictly because of $b_i>0$. Actually, for the follower $i$, $\hat{y}_i$ is a common interior point in the set-valued map $S_{F, i}$ \cite[Page 343]{kreps2013microeconomic}.  This implies that $\exists \delta>0$, such that
 \begin{equation}\label{hat1}
 	\mu_i(x) \hat{y}_i+l_i \sqrt{\Sigma_i}\left|\hat{y}_i\right| \leq b_i-\delta
 \end{equation}
 Then we set $\left\{x_j\right\} \rightarrow x$ and $y_{i} \in S_{F,i}(x)$.
  To show lower semi-continuity, we have to produce a sequence $\left\{y_{i,j}\right\}$ with $y_{i,j} \in S_{F,i}(x_j)$ such that $\left\{y_{i,j}\right\} \rightarrow y_i$. Since $Y_i$ is convex, all the convex combinations of $y_i$ and $\hat{y}_i$ are in  $Y_i$. By fixing a large positive integer $M$, there exists $J_M$ sufficiently large such that 
  \begin{equation}\label{hat2}
  	\mu_i(x_j) y_{i}+l_i \sqrt{\Sigma_i}\left|y_{i}\right| \leq b_i+\frac{\delta}{M}, \forall j>J_M
  \end{equation}
  
  Combining \eqref{hat1} and \eqref{hat2}, we have the 
  convex combination of $y_i$ and $\hat{y}_i$, i.e.,
  \begin{align}\label{hat3}
  		& \mu_i\left(x_j\right)\left(\frac{M}{M+1} y_i+\frac{1}{M+1} \hat{y}_i\right)
  		+l_i \sqrt{\Sigma_i}\left|\frac{M}{M+1} y_i+\frac{1}{M+1} \hat{y}_i\right| \nonumber\\
  		& \leq\left(\frac{M}{M+1}\right)\left(\mu_i\left(x_j\right) y_i+l_i \sqrt{\Sigma_i}\left|y_i\right|\right)\nonumber\\
  		&
  		\qquad\qquad\qquad+\left(\frac{1}{M+1}\right)\left(\mu_i\left(x_j\right) \hat{y}_i+l_i \sqrt{\Sigma_i}\left|\hat{y}_i\right|\right) \nonumber\\
  		& \leq\left(\frac{M}{M+1}\right)\left(b_i+\frac{\delta}{M}\right)+\left(\frac{1}{M+1}\right)\left(b_i-\delta\right) \nonumber\\
  		& =b_i, \forall j >J_M
   \end{align}
 The result in \eqref{hat3} is obtained mainly through the trigonometry theorem. Then we can conclude that
 \begin{equation}\label{hat4}
 	\left(\frac{M}{M+1} y_i+\frac{1}{M+1} \hat{y}_i\right) \in S_{F, i}\left(x_j\right)
 \end{equation}
 
 Now we can choose $J_1$, $J_2$, $J_3$, ..., so that $J_1<J_2<J_3$.... For any $\forall j > J_1$, let $M_j$ be the largest integer such that $j>J_{M_j}$. This ensures that $lim _{j \rightarrow \infty} M_j=+\infty$. 
 Then we set 
 \begin{equation}\label{hat5}
 	y_{i, j}=\frac{M_j}{M_j+1} y_i+\frac{1}{M_j+1} \hat{y}_i
 \end{equation}
 Based on \eqref{hat4}, we have known that $y_{i, j} \in S_{F, i}\left(x_j\right) $. Based on \eqref{hat5}, we have $\left\{y_{i,j}\right\} \rightarrow y_i$. So the set-valued map $S_{F,i}$ is lower semicontinuous for any $x \in X$. Because the set-valued map $S_{F,i}$ is both upper semicontinuous and lower semicontinuous,  $S_{F,i}$ is continuous for any  $x \in X$ \cite[Page 472]{kreps2013microeconomic}.
 
\item  	(\romannumeral 2 ) \textbf{Compactness and convexity}:
 We have known that the set-valued map $S_{F,i}$ is upper semicontinuous. In addition, for any $x \in X$, we have $S_{F, i}(x) \subset Y_i$. This means that the set-valued map $S_{F,i}$ is upper semicontinuous and locally bounded. Therefore, for any $ x \in X$ , $S_{F, i}(x)$ is a  compact set \cite[Page 472]{kreps2013microeconomic}.
 
 Next, we prove the convexity of $S_{F, i}(x)$.
 For any $x \in X$, $\mu_i\left(x\right)y_i+l_i \sqrt{\Sigma_i {y_i}^2} \leq b_i$ is a second order cone, which is convex. The intersection of $Y_i$ and $\mu_i\left(x\right)y_i+l_i \sqrt{\Sigma_i {y_i}^2} \leq b_i$ must be convex because $Y_i$ is also convex.

 This completes the proof.
\end{proof}

 Lemma \ref{lemmaSF} shows that the set-valued map $S_{F,i}$ of follower constraint is continuous with compact and convex values. Next, we will show some key properties of set-valued map $B$, which denotes the best response of all followers.

\begin{lemma} \label{lemmaB}
	Suppose Assumptions \ref{assumption1} and \ref{assumption2} hold, the set-valued map $B$ defined in \eqref{B1} has the following properties.
	
	(\romannumeral 1 ) The set-valued map $B$ is non-empty in $X \times \prod_{i=1}^I Y_{i}$;

	(\romannumeral 2 ) For any $x \in X$ and 
	$\boldsymbol{y} \in \prod_{i=1}^I Y_{i}$, $B(x,\boldsymbol{y})$ is a convex set;
	
	(\romannumeral 3 )  For any $x \in X$ and 
	$\boldsymbol{y} \in \prod_{i=1}^I Y_{i}$, $B(x,\boldsymbol{y})$ is a compact set;
	
		(\romannumeral 4 )  The set-valued map $B$ is upper semicontinuous  with respect to $(x,\boldsymbol{y})$.
\end{lemma}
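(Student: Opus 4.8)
The plan is to reduce all four claims to the corresponding statements about the individual best-response maps $H_i$ defined in \eqref{Hi1}, since $B$ is the finite Cartesian product $B(x,\boldsymbol{y})=\prod_{i=1}^I H_i(x,y_{-i})$ in \eqref{B1}, and each of non-emptiness, convexity, compactness, and upper semicontinuity is preserved under finite products. So the substantive work is to show, for every $i \in N$, that $H_i(x,y_{-i})$ is a non-empty compact convex set and that $H_i : X \times Y_{-i} \rightrightarrows Y_i$ is upper semicontinuous. The key observation throughout is that $H_i(x,y_{-i})$ is exactly the argmin of $\varphi_i(x,y_{-i},\cdot)$ over the feasible set $S_{F,i}(x)$, i.e. it is the marginal map of Definition \ref{marginal} with $f=-\varphi_i$ and $G=S_{F,i}$ (the sign flip turning the followers' minimization into the supremum used in Definition \ref{marginal}), and parameter $(x,y_{-i})$.

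For (\romannumeral 1 ), Lemma \ref{lemmaSF} gives that $S_{F,i}(x)$ is compact, and it is non-empty because the origin lies in $Y_i$ by Assumption \ref{assumption2} (\romannumeral 3 ) and satisfies $\mu_i(x)\cdot 0 + l_i\sqrt{\Sigma_i\cdot 0}=0\le b_i$ since $b_i>0$ by Assumption \ref{assumption2} (\romannumeral 2 ); since $\varphi_i(x,y_{-i},\cdot)$ is continuous by Assumption \ref{assumption2} (\romannumeral 1 ), the minimum is attained by the Weierstrass theorem, so $H_i(x,y_{-i})\neq\emptyset$. For (\romannumeral 2 ), $S_{F,i}(x)$ is convex by Lemma \ref{lemmaSF} (\romannumeral 2 ) and $\varphi_i(x,y_{-i},\cdot)$ is convex by Assumption \ref{assumption2} (\romannumeral 1 ), so $H_i(x,y_{-i})$ is the intersection of the convex feasible set with a lower level set of a convex function at its minimum value, hence convex. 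For (\romannumeral 3 ), $H_i(x,y_{-i})\subseteq S_{F,i}(x)$ is contained in a compact set and is closed, being the intersection of the closed set $S_{F,i}(x)$ with the preimage of the minimum value under the continuous function $\varphi_i(x,y_{-i},\cdot)$; a closed subset of a compact set is compact.

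The upper semicontinuity in (\romannumeral 4 ) is the main obstacle, and it is exactly where the continuity of $S_{F,i}$ proved in Lemma \ref{lemmaSF} (\romannumeral 1 ) — rather than merely assumed — is exploited. I would apply Lemma \ref{lemma2} (\romannumeral 3 ): with $f=-\varphi_i$, continuous on $X\times\prod_{i=1}^I Y_i$ by Assumption \ref{assumption2} (\romannumeral 1 ), and with $G=S_{F,i}$ regarded as a set-valued map on $X\times Y_{-i}$ that ignores the $y_{-i}$ coordinate (hence still continuous with compact values by Lemma \ref{lemmaSF}), the marginal map $H_i$ is upper semicontinuous on $X\times Y_{-i}$. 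Composing with the continuous projection $(x,\boldsymbol{y})\mapsto(x,y_{-i})$ preserves upper semicontinuity, and a finite product of upper semicontinuous set-valued maps with compact values is again upper semicontinuous with compact values, so $B=\prod_{i=1}^I H_i$ is upper semicontinuous on $X\times\prod_{i=1}^I Y_i$. The points to watch are: matching the inf-formulation of $H_i$ to the sup-formulation of Definition \ref{marginal} via $f=-\varphi_i$; verifying that treating $S_{F,i}$ as a map constant in $y_{-i}$ does not destroy its continuity or compact-valuedness; and the standard but slightly technical fact that finite products and continuous reparametrizations of the domain preserve upper semicontinuity with compact values.
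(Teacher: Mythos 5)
Your proposal is correct and follows essentially the same route as the paper: reduce everything to the per-follower maps $H_i$, obtain non-emptiness/convexity/compactness from Lemma \ref{lemmaSF} together with the convexity and continuity of $\varphi_i$, and get upper semicontinuity by viewing $H_i$ as the marginal map of Definition \ref{marginal} (with $S_{F,i}$ trivially extended to depend on $y_{-i}$) and invoking Lemma \ref{lemma2} (\romannumeral 3 ), then passing to the finite product $B=\prod_i H_i$. If anything, you are slightly more careful than the paper in two spots it glosses over: you justify non-emptiness via Weierstrass on the compact non-empty set $S_{F,i}(x)$ (the paper appeals only to convexity, which alone does not give attainment), and you make explicit the sign flip $f=-\varphi_i$ needed to match the sup-formulation of Definition \ref{marginal}.
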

\begin{proof}
		\item  	(\romannumeral 1 ) \textbf{Non-empty}: Based on Lemma \ref{lemmaSF} (\romannumeral 2 ), for any $x \in X$, $S_{F,i}(x)$ is a convex set. For any $x \in X$ and 
		$ y_{-i} \in Y_{-i}$, the payoff function $\varphi_i\left(x, y_{-i}, y_i\right)$ of the follower $i$ is convex.  Thus for any given $(x, y_{-i})$, there exists at least one $y_i$ such that
		\begin{equation}
			\begin{aligned} & y_i \in \arg \min _{y_i^1} \varphi_i\left(x, y_{-i}, y_i^1\right) \\ & \text { s.t. } \quad y_i^1 \in S_{F, i}(x)
			\end{aligned}\nonumber 
		\end{equation}
	This means that $H_i(x,y_{-i})$ is a non-empty set. $B(x,\boldsymbol{y})$ is also a non-empty set because of $H_i(x,y_{-i}) \subset B(x,\boldsymbol{y})$. 
		\item  (\romannumeral 2 ) \textbf{Convexity}: For any given $ x \in X$ and $ y_{-i} \in Y_{-i}$, let $y_i^1, y_i^2$ be any two points of  $H_i\left(x, y_{-i}\right)$.
		For any $ y_i^1, y_i^2 \in H_i\left(x, y_{-i}\right)$ and $\gamma \in[0,1]$, let $\hat{y}_i=\gamma y_i^1+(1-\gamma) y_i^2$, $\hat{y}_i \in S_{F, i}(x)$.
 		 Based on Assumption \ref{assumption2} (\romannumeral 1 ), we have
		 \begin{equation}\label{convex1}
		 	\begin{aligned}
		 		&\varphi_i\left(x, y_{-i}, \hat{y}_i\right)  =\varphi_i\left(x, y_{-i}, \lambda y_i^1+(1-\lambda) y_i^2\right) \\
		 		&\qquad\qquad\leq \lambda \varphi_i\left(x, y_{-i}, y_i^1\right)+(1-\lambda) \varphi_i\left(x, y_{-i}, y_i^2\right)
		 	\end{aligned}
		 \end{equation}
		Because $ y_i^1, y_i^2 \in H_i\left(x, y_{-i}\right)$, we have
		\begin{equation}
			\begin{aligned}\label{convex2}
				& \lambda \varphi_i\left(x, y_{-i}, y_i^1\right) \leq \varphi_i\left(x, y_{-i}, y_{i_0}\right),
				 \forall y_{i_0} \in S_{F, i}(x)
			\end{aligned}
		\end{equation}
		\begin{equation}\label{convex3}
			\begin{aligned}
				& \lambda \varphi_i\left(x, y_{-i}, y_i^2\right) \leq \varphi_i\left(x, y_{-i}, y_{i_0}\right), \forall y_{i_0} \in S_{F, i}(x)
			\end{aligned}
		\end{equation}
		Combining \eqref{convex1}, \eqref{convex2} and \eqref{convex3}, we obtain 
		 \begin{equation}\label{convex4}
			\begin{aligned}
				 \varphi_i\left(x, y_{-i}, \hat{y}_i\right) &\leq \lambda \varphi_i\left(x, y_{-i}, y_{i_0}\right)+(1-\lambda)  \varphi_i\left(x, y_{-i}, y_{i_0}\right) \\
				&=\varphi_i\left(x, y_{-i}, y_{i_0}\right),\forall y_{i_0} \in S_{F, i}(x)
			\end{aligned}
		\end{equation}
			i.e., $\hat{y}_i \in H_i\left(x, y_{-i}\right)$.
			So $H_i\left(x, y_{-i}\right)$ is a convex set, and $B(x,\boldsymbol{y})$ is a convex set because of the definition \eqref{B1}. 
		\item  (\romannumeral 3 )  \textbf{Compactness}: Let $\varphi_i^*$ denote the objective value of the problem \eqref{follower22}. Then the set-valued map $H_i$ is represented as follows.
		\begin{equation}\label{compact1}
			H_i\left(x, y_{-i}\right) \triangleq S_{F, i}(x) \cap\left\{y_i \in Y_i \mid \varphi_i\left(x, y_{-i}, y_i\right) \leq \varphi_i^*\right\}
		\end{equation}
		Based on Lemma \ref{lemmaSF}  (\romannumeral 2 ), $S_{F, i}(x)$ is a compact set. $\left\{y_i \in Y_i \mid \varphi_i\left(x, y_{-i}, y_i\right) \leq \varphi_i^*\right\}$ is a compact set since $\varphi_i\left(x, y_{-i}, y_i\right)$ is continuous  with respect to $y_i$ and $Y_i$ is compact. 
		Thus $H_i\left(x, y_{-i}\right)$ must be a compact set, which completes the proof of Lemma \ref{lemmaB} (\romannumeral 3 ).
	\item  (\romannumeral 4 ) \textbf{Semicontinuity}:  Based on  Definition \ref{marginal}, $H_i$ is a marginal map. We can rewrite the form \eqref{Hi1} of the set-valued map $H_i$ as follows.
	\begin{equation}
		\begin{aligned}\label{Hi2}
			& H_i\left(x, y_{-i}\right) \triangleq\left\{y_i \in S_{F, i}(x,y_{-i}) \mid \forall y_i^1 \in S_{F, i}(x,y_{-i})\right. \\
			& \left.\varphi_i\left(x, y_{-i}, y_i\right) \leq \varphi_i\left(x, y_{-i}, y_i^1\right)\right\}
		\end{aligned}
	\end{equation}
	In fact, $S_{F, i}(x,y_{-i})$ is not dependent on $y_{-i}$, definition \eqref{Hi2} is equivalent to definition \eqref{Hi1}. Based on the Assumption \ref{assumption2} and Lemma \ref{lemmaSF}, the payoff function
	$\varphi_i\left(x, y_{-i}, y_i\right)$ is continuous  with respect to $ (x, y_{-i})$
 and the set-valued map $S_{F, i}$ is continuous  with compact values. According to Lemma \ref{lemma2} (\romannumeral 3 ), $H_i$ is upper semicontinuous with respect to $ (x, y_{-i})$. Then the set-valued map $B$ is upper semicontinuous with respect to $(x,\boldsymbol{y})$ because of the definition \eqref{B1}, which completes the proof of Lemma \ref{lemmaB} (\romannumeral 4 ).

 This completes the proof.
 \end{proof}

Key properties of the set-valued map $B$ in the lower-level game \eqref{follower22} are given by Lemma \ref{lemmaB}. It indicates that $B$ is an upper semicontinuous set-valued map with non-empty compact convex values. Then,  we can obtain the following lemma for the upper-level game \eqref{l11}-\eqref{l13}.
	
\begin{lemma} \label{lemmaleader}
	Suppose Assumptions \ref{assumption1} and \ref{assumption2} hold. 	For the upper-level game \eqref{l11}-\eqref{l13}, there exists at least one best response point $x^*$ such that $x^* \in K$.
\end{lemma}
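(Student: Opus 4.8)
The plan is to show that the set $K$ in \eqref{K2} is non-empty by invoking the Weierstrass extreme value theorem applied to the marginal function $V(x)$ over the compact set $X$. Concretely, since $K = \{x \in X : x \in \arg\max_{x} V(x)\}$, it suffices to prove that (a) $V$ is well-defined and finite at every $x \in X$, and (b) $V$ is upper semicontinuous on $X$; together with the compactness of $X$ from Assumption \ref{assumption1} (\romannumeral 2 ), these guarantee that $V$ attains its supremum, hence $K \neq \emptyset$.

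\textbf{Step 1 (well-definedness of $V$).} For each fixed $x \in X$, the inner problem in \eqref{VX} maximizes $f(x,\cdot)$ over $B(x,\boldsymbol{y})$. By Lemma \ref{lemmaB} (\romannumeral 1 ) and (\romannumeral 3 ), $B(x,\boldsymbol{y})$ is non-empty and compact, and by Assumption \ref{assumption1} (\romannumeral 1 ) $f(x,\cdot)$ is continuous; hence the maximum is attained and $V(x) \in \mathbb{R}$. One should note that the fixed-point-type constraint $\boldsymbol{y} \in B(x,\boldsymbol{y})$ is, for fixed $x$, exactly the set of Nash equilibria of the lower-level game, which is non-empty because each $H_i(x,y_{-i})$ is non-empty, convex, compact, and upper semicontinuous in $y_{-i}$ (Lemma \ref{lemmaB}), so Kakutani's theorem (Lemma \ref{lemma3}) applied on $\prod_{i=1}^I Y_i$ yields existence of such a $\boldsymbol{y}$; this is the set over which $V(x)$ is the supremum.

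\textbf{Step 2 (upper semicontinuity of $V$).} Here I would like to apply Lemma \ref{lemma2} (\romannumeral 2 ) with the roles $f \leftrightarrow f(x,\boldsymbol{y})$, $y \leftrightarrow x$, and the set-valued map $G(x) \triangleq \{\boldsymbol{y} \in \prod_{i=1}^I Y_i : \boldsymbol{y} \in B(x,\boldsymbol{y})\}$, i.e. the equilibrium map of the lower-level game. Lemma \ref{lemma2} (\romannumeral 2 ) then gives upper semicontinuity of $V$ provided $f$ is upper semicontinuous (immediate from Assumption \ref{assumption1} (\romannumeral 1 )) and $G$ is upper semicontinuous with compact values. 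Compact-valuedness of $G$ follows since $G(x)$ is a closed subset of the compact set $\prod_{i=1}^I Y_i$; closedness and upper semicontinuity of $G$ follow from the upper semicontinuity of $B$ in $(x,\boldsymbol{y})$ established in Lemma \ref{lemmaB} (\romannumeral 4 ) by a standard closed-graph argument (take $x_j \to x$, $\boldsymbol{y}_j \in G(x_j)$ with $\boldsymbol{y}_j \to \boldsymbol{y}$; then $\boldsymbol{y}_j \in B(x_j,\boldsymbol{y}_j)$, and upper semicontinuity of $B$ forces $\boldsymbol{y} \in B(x,\boldsymbol{y})$, so $\boldsymbol{y} \in G(x)$).

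\textbf{Step 3 (conclusion).} With $V$ upper semicontinuous on the non-empty compact set $X$, the Weierstrass theorem gives $x^* \in X$ with $V(x^*) = \max_{x \in X} V(x)$, i.e. $x^* \in K$. I expect the main obstacle to be Step 2: one must carefully justify that the lower-level equilibrium map $G$ — obtained as the fixed-point set of the parametrized map $B(x,\cdot)$ — is upper semicontinuous with compact values, since Lemma \ref{lemma2} (\romannumeral 2 ) is stated for a generic constraint map $G(y)$ and does not directly speak about fixed-point sets; the closed-graph argument bridging Lemma \ref{lemmaB} (\romannumeral 4 ) to upper semicontinuity of $G$ is the crux, and the compactness of $\prod_{i=1}^I Y_i$ (Assumption \ref{assumption2} (\romannumeral 3 )) is what makes it go through.
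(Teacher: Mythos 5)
Your proposal is correct and follows essentially the same route as the paper: upper semicontinuity of the marginal function $V(x)$ via Lemma \ref{lemma2} (\romannumeral 2 ) combined with the Weierstrass theorem on the non-empty compact set $X$. In fact you are more careful than the paper's own proof, which applies Lemma \ref{lemma2} (\romannumeral 2 ) directly to $B$ without observing that the constraint map in \eqref{VX} is really the fixed-point set $G(x)=\{\boldsymbol{y} : \boldsymbol{y}\in B(x,\boldsymbol{y})\}$ rather than $B$ itself; your closed-graph argument for the upper semicontinuity and compact-valuedness of $G$, together with the Kakutani-based non-emptiness check in Step 1, supplies exactly the details the paper glosses over.
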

\begin{proof}
As shown in Assumption \ref{assumption1}, the payoff function of the leader $f(x, \boldsymbol{y})$ is continuous with respect to $(x, \boldsymbol{y})$. The set-valued map $B$ is upper semicontinuous with respect to $(x,\boldsymbol{y})$, which is proved in Lemma \ref{lemmaB} (\romannumeral 4 ). Based on Lemma \ref{lemma2} (\romannumeral 2 ), the marginal function $V(x)$ \eqref{VX} is upper semicontinuous with respect to $x$. The leader's strategy space $X$ is a non-empty, compact set. Thus the marginal function $V(x)$ must have a maximum point $x^*$ on $X$ \cite[Page 451]{kreps2013microeconomic}. 
$x^*$ denotes the best response of the leader, i.e., $\exists x^* \in K$. 
\end{proof}

 Based on Lemma \ref{lemmaleader}, $x^* \in K$ is the best response of the leader in game $\Gamma$. Then we need to prove that there exists at least one fixed point $\boldsymbol{y}^*$ in the lower-level game at a given $x^*$. The fixed point $(x^*,\boldsymbol{y}^*)$ is the equilibrium of  the game $\Gamma$. Finally, we can obtain the following theorem. 
 \begin{theorem}
 	\label{theorem2}  
 	Suppose Assumptions \ref{assumption1} and \ref{assumption2} hold, then there exists at least one equilibrium point $(x^*, \boldsymbol{y}^*)$ in game $\Gamma$.
 \end{theorem}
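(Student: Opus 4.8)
The plan is to combine the three preceding lemmas through a two-stage argument that mirrors the hierarchical structure of $\Gamma$: first fix an optimal leader strategy, then solve the induced lower-level Nash game by a fixed-point argument, and finally verify that the resulting pair meets Condition~\ref{condition for equilibrium}. First I would invoke Lemma~\ref{lemmaleader}: the marginal function $V(x)$ in \eqref{VX} attains its maximum over the non-empty compact set $X$ at some $x^\ast \in K$, and I take this $x^\ast$ as the candidate leader equilibrium strategy.

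Next I would produce the followers' equilibrium at $x^\ast$. Fixing $x = x^\ast$, consider the self-map $\boldsymbol{y} \mapsto B(x^\ast, \boldsymbol{y})$ on $\prod_{i=1}^I Y_i$. By Assumption~\ref{assumption2}~(\romannumeral 3), $\prod_{i=1}^I Y_i$ is a non-empty, compact, convex subset of $\mathbb{R}^I$; by Lemma~\ref{lemmaB}, $B(x^\ast,\cdot)$ is upper semicontinuous with non-empty, compact, convex values. Kakutani's fixed-point theorem (Lemma~\ref{lemma3}) then yields $\boldsymbol{y}^\ast$ with $\boldsymbol{y}^\ast \in B(x^\ast, \boldsymbol{y}^\ast)$. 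Unpacking $B(x^\ast,\boldsymbol{y}^\ast) = \prod_{i=1}^I H_i(x^\ast, y_{-i}^\ast)$ gives, for each $i$, $y_i^\ast \in S_{F,i}(x^\ast) = \widehat{\Omega}_i$ and $\varphi_i(x^\ast, y_{-i}^\ast, y_i^\ast) \le \varphi_i(x^\ast, y_{-i}^\ast, y_i^1)$ for all feasible $y_i^1$, which is exactly Condition~\ref{condition for equilibrium}~(b).

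For Condition~\ref{condition for equilibrium}~(a) I would be careful to select, among all fixed points of $B(x^\ast,\cdot)$, one that also attains $V(x^\ast)$: the fixed-point set $\{\boldsymbol{y}:\boldsymbol{y}\in B(x^\ast,\boldsymbol{y})\}$ is non-empty (by the previous step) and closed, since $B(x^\ast,\cdot)$ is upper semicontinuous with closed values on a compact set, hence compact, so the continuous map $f(x^\ast,\cdot)$ attains a maximum on it; choosing $\boldsymbol{y}^\ast$ to be such a maximizer preserves part~(b) and gives $f(x^\ast,\boldsymbol{y}^\ast) = V(x^\ast)$. Since $x^\ast \in K = \arg\max_{x\in X} V(x)$, it follows that $f(x^\ast,\boldsymbol{y}^\ast) = V(x^\ast) \ge V(x^1) \ge f(x^1,\boldsymbol{y})$ for every $x^1 \in X$ and every $\boldsymbol{y}$ feasible for the leader's problem at $x^1$, which yields Condition~\ref{condition for equilibrium}~(a). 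Definition~\ref{equilibrium} then identifies $(x^\ast,\boldsymbol{y}^\ast)$ as an equilibrium of $\Gamma$.

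The bulk of the technical work has already been discharged in Lemmas~\ref{lemmaSF}--\ref{lemmaleader} — especially the lower semicontinuity of $S_{F,i}$, which hinges on the strict interior point $\hat{y}_i$ provided by $b_i>0$ together with $0 \in Y_i$ — so Theorem~\ref{theorem2} itself is mainly an assembly step. The one point that needs care is conceptual rather than computational: the leader's objective is optimized over the implicitly constrained graph $\{\boldsymbol{y}\in B(x,\boldsymbol{y})\}$ through $V$, so I must make sure the leader's optimum is realized at a genuine Nash equilibrium of the followers at $x^\ast$ and that a single $\boldsymbol{y}^\ast$ simultaneously serves in both parts of Condition~\ref{condition for equilibrium}; exhibiting the $f(x^\ast,\cdot)$-maximizer over the closed fixed-point set, rather than an arbitrary Kakutani fixed point, is precisely what makes this work.
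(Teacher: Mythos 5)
Your proposal is correct and follows essentially the same route as the paper: Lemma~\ref{lemmaleader} supplies the optimal leader strategy $x^*$, and Kakutani's fixed-point theorem applied to $B(x^*,\cdot)$ on the non-empty, compact, convex set $\prod_{i=1}^I Y_i$ supplies the followers' equilibrium $\boldsymbol{y}^*$. Your additional step of selecting $\boldsymbol{y}^*$ as a maximizer of $f(x^*,\cdot)$ over the compact fixed-point set, so that $f(x^*,\boldsymbol{y}^*)=V(x^*)$, is a refinement the paper's proof omits (it takes an arbitrary Kakutani fixed point), and it is in fact what is needed for the leader's optimality in Condition~\ref{condition for equilibrium}~(a) to follow from $x^*\in\arg\max_{x\in X} V(x)$.
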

 
 \begin{proof}
 	Let $x^*$ be the best response of the leader according to Lemma \ref{lemmaleader}. The set-valued map $B$  is turned into 
  \begin{align*}
 	 B:  \prod\nolimits_{i=1}^I Y_{i} \rightrightarrows \prod\nolimits_{i=1}^I Y_{i}, \forall x^* \in \arg \max _{x \in X} V(x)
  \end{align*}
 	The set-valued map $B$ is upper semicontinuous and $B(x^*,\boldsymbol{y})$
 is a nonempty compact convex set, which is proved in Lemma \ref{lemmaB}. The set-valued map $B$ has a
 fixed point $\boldsymbol{y}^*$ by Kakutani's fixed-point theorem (Lemma \ref{lemma3}). According to the definition of game $\Gamma$ \eqref{follower2}, $(x^*,\boldsymbol{y}^*)$ is the equilibrium point. This completes the proof of Theorem \ref{theorem2}.
 \end{proof}
 Theorem \ref{theorem2} provides sufficient conditions for the existence of an equilibrium point in the game $\Gamma$ \eqref{follower2}, e.g., (\romannumeral 1 ) continuity; (\romannumeral 2 ) convexity; (\romannumeral 3 ) compactness. 
 Different from the leader-follower game with DIUs \cite{aghassi2006robust, hu2013existence, wang2024existence, liu2018distributionally, singh2017distributionally, fabiani2023distributionally}, the existence of DDUs leads to strategy spaces $\widehat{\Omega}_i$ of followers that are impacted by the leader's strategy $x$. Thus, the existence of an equilibrium in game $\Gamma$ needs us to prove that the set-valued maps $S_{F,i}$ are continuous and have non-empty, compact, convex values. 

\section{Illustrative Examples}
We consider a game $\Gamma$ with one leader, three followers $i\in N=\{1, 2, 3\}$. For the leader, the strategy is denoted by $x \in X \subseteq \mathbb{R}$.
For each follower $i$,  the strategy is denoted by $y_i \in \left\{Y_i \mid \xi_i(x) \cdot y_i \leq b_i\right\} \subseteq \mathbb{R}$, and there is a first-order DDU $\xi_i(x)$.

The leader's problem is defined as
\begin{subequations}\label{l1}
	\begin{align} \label{leader1}
		& \max f(x, \boldsymbol{y})=a\sum\nolimits_{i \in N} xy_{i}+c \\
		\label{leader2}
		& \text { s.t. } x \in X
	\end{align}
\end{subequations}
where $a=1$, $\boldsymbol{y}=(y_1,y_2,y_3)$, $c=1.5$, 
$X=\{x \mid 0 \leq x \leq 8\}$.

The followers’ problems with DDCCs are defined as
\begin{subequations} \label{f1}
	\begin{align} 
		&\min \varphi_i\left(y_i, y_{-i}, x\right)=e_i x y_i-k_i\sum_{-i \in N,-i \neq i} y_{-i} \\
		&\text { s.t. } y_i \in\left\{Y_i \mid \inf _{PD_{{\xi}_i(x)} \in D_i(x)} {\mathbb{P}}\left[
		\xi_i(x) \cdot y_i \leq b_i
		\right]\geq 1-\alpha_i \right\} 
	\end{align}
\end{subequations}
where $e_i=1$, $k_i=2$, $b_i=20$, $\alpha_i=0.05$, $Y_i=\{y_i \mid 0 \leq y_i \leq 20\}$. The decision-dependent ambiguity sets $D_i(x)$ are defined as \eqref{Di}. The DDCC reformulation method in Theorem \ref{theorem1} is utilized to deal with the DDU $\xi_i(x)$. Let $\gamma_{i,1} / \gamma_{i,2} \leq \alpha_i$ hold, 
then the followers’ problems  are
\begin{subequations}\label{F_ddcc}
	\begin{align} 
		\label{F_ddcc1}
		\min \varphi_i&\left(y_i, y_{-i}, x\right)=e_i x y_i-k_i\sum_{-i \in N,-i \neq i} y_{-i} \\
		\label{F_ddcc2}
		\text { s.t. }& y_i \in\left\{Y_i \mid \mu_i(x) y_i+l_i \sqrt{\Sigma_i y_i^2} \leq b_i\right\} \\
		\label{F_ddcc3}
		& \mu_i(x)=\mu_{i}+d_i x\\
		\label{F_ddcc4}
		&l_i=\sqrt{\gamma_{i,1}}+\sqrt{\left(\frac{1-\alpha_i}{\alpha_i}\right)\left(\gamma_{i,2}-\gamma_{i,1}\right)}
	\end{align}
\end{subequations}
where $d_i=0.1$, $\mu_1=1.2$, $\mu_2=1.5$, $\mu_3=1.6$, $\Sigma_1=0.01, \Sigma_2=0.02, \Sigma_3=0.03$, $\gamma_{i,1}=0.01$, $\gamma_{i,2}=1.01$. \eqref{F_ddcc3} represents the relation between the probability distribution of DDU $\xi_i(x)$ and leader's strategy $x$. The followers' problems \eqref{F_ddcc} at present are second-order cone optimization problems, which are convex. In addition, since the leader makes strategy $x$ before followers, we can solve the followers' problems by using KKT conditions. %
The simulation is implemented on MATLAB 2022b, using the YALMIP toolbox and Gurobi 11.0. The equilibrium point of the game is 
\begin{equation}
	\begin{aligned}
		& x^*=7.778, \ \boldsymbol{y}^*=(8.177,	6.825,	6.305). 
	\end{aligned}
	\nonumber
\end{equation}
This example shows the existence of the equilibrium for leader-follower games with DDCCs.

Note that $\gamma_{i,1}$ and $\gamma_{i,2}$ are risk measurement factors, which measure the leader's confidence in the estimated mean $\mu_i\left(x\right)$ and the estimated variance $\Sigma_i$ \cite{delage2010distributionally}. 
In order to measure the impact of $\gamma_{i,1}$ and $\gamma_{i,2}$ on the game equilibrium, we set $\gamma_{i,1}=0.01\rho$ and $\gamma_{i,2}=1.01\rho$. $\rho \in[1,10]$ denotes the weighting factor of the risk measurement. 
Table \ref{tab:1} shows the equilibrium points $(x^*,\boldsymbol{y}^*)$ with different values of weighting factor $\rho$. 
Fig.\ref{Payoff of players} (a) shows that the payoff of the leader decreases as the $\rho$ increases. Because the leader wants to maximize the payoff function \eqref{leader1}, the leader is at a loss as the $\rho$ increases. 
Fig.\ref{Payoff of players} (b) shows that the payoff of followers decreases as the $\rho$ increases. Because followers want to minimize the payoff function \eqref{F_ddcc1}, they are profitable as the $\rho$ increases.  
According to the definition of decision-dependent ambiguity set $D_i(x)$ \eqref{Di}, the statistical distance between the estimated probability distribution of DDU $\xi_i(x)$ and the reference probability distribution will increase with $\gamma_{i,1}$ and $\gamma_{i,2}$ increasing.
So the leader will be conservative in estimating the strategies $\boldsymbol{y}$ of followers and making strategy $x$ to hedge against the worst probability distribution in $D_i(x)$. 

This will inevitably reduce the profits of the leader. 
This economic phenomenon suggests that the leader can improve profits by proactively controlling DDU risk measurement factors $\gamma_{i,1}$ and $\gamma_{i,2}$.
\begin{table}[t]
	\centering
	\footnotesize
	\caption{Equilibrium points with different $\rho$}
	\label{tab:1}  
	\begin{tabular}{ccc} 
		\hline\hline\noalign{\smallskip}	
		\makecell[c]{Weighting factor $\rho$} & \makecell[c]{Equilibrium points $x^*$ } & \makecell[c]{Equilibrium points $\boldsymbol{y}^*$} \\
		\noalign{\smallskip}\hline\noalign{\smallskip}
		1 & 7.778 & (8.177,	6.825,	6.305) 
		\\
		3 & 7.863 & (7.214,	5.896,	5.351) 
		\\
		
		5 & 7.898 & (6.673,	5.391,	4.846)  
		\\
		
		7 & 7.918 & (6.290,	5.040,	4.501) 
		\\
		
		9 & 7.932 & (5.992,	4.771,	4.240) \\
		
		10 & 7.937 & (5.865, 4.658,	4.130) \\
		\noalign{\smallskip}\hline
	\end{tabular}
\end{table}

\begin{figure}[t]
	\centering
	\subfigure[Payoff of the leader ]{
		\includegraphics[width=4.1cm]{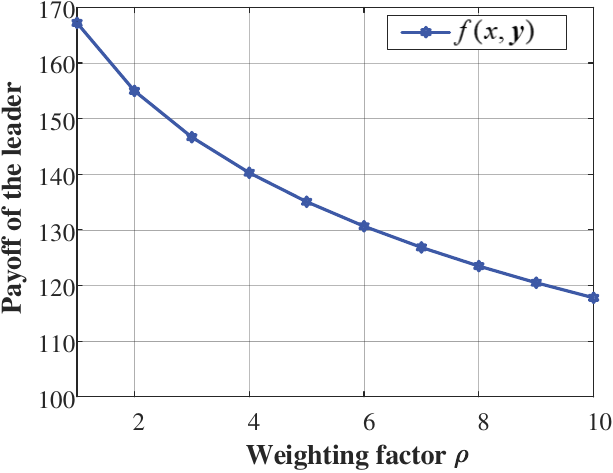}
	}
	\subfigure[Payoff of followers ]{
		\includegraphics[width=4.1cm]{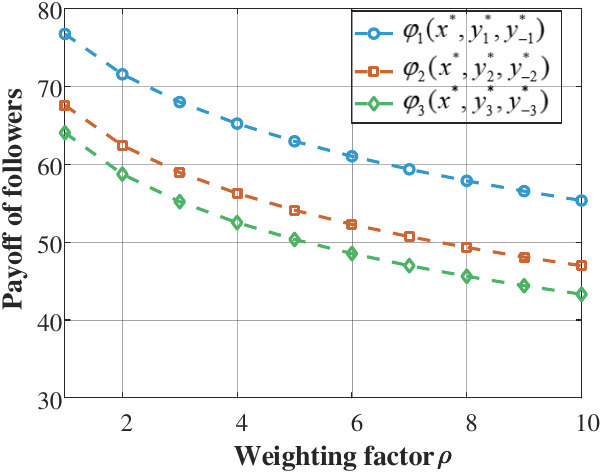}
	}
	\caption{ Payoff of players with different $\rho$}
		\label{Payoff of players}
\end{figure}

\section{Conclusion}
In this paper, we discuss the existence of the equilibrium of the single-leader-multiple-follower game with DDCCs. First, we formulate the game model, where the decision-dependent ambiguity sets are defined based on moment information. The moment information is parameterized by the leader's strategy, which characterizes the probability distributions of DDUs.
Then, the DDCCs are transformed into second-order cone constraints by Cantelli's inequality, which eliminates the probability distributions in the lower-level problem. This allows us to construct set-valued maps with properties of continuity, compactness, and convexity. 
Finally, we prove the existence of the game equilibrium 
based on Kakutani's fixed-point theorem.
Case studies with a one-leader-three-follower game show that the leader can increase profits by controlling the risk measurement factors of DDUs. 

This paper is an extension of the leader-follower game using distributionally robust optimization and also complements the leader-follower game with DDUs.
Since the exact probability distribution information of DDUs is difficult to obtain in real scenarios, the results in this paper are promising in many practical problems, such as Vehicle-to-Grid (V2G) control, multi-stage planning for new energy generation, and disaster management.
The method also enables potential extensions to other forms of game problems with DDCCs.
\appendices
\makeatletter
\@addtoreset{equation}{section}
\@addtoreset{theorem}{section}
\makeatother
\renewcommand{\theequation}{A.\arabic{equation}}
\renewcommand{\thetheorem}{A.\arabic{theorem}}
\setcounter{equation}{0}

\section{Proof of Theorem \ref{theorem1}}
\label{section_appendix_theorem1}
\begin{proof}
	Inspired by the distributionally robust chance constraint method \cite{zhang2018ambiguous}, we first introduce three auxiliary variables $\widetilde{s}_i$, $\widetilde{\beta}_i$, $c_i$ which are defined in \eqref{auxiliary}. Then, three auxiliary sets are defined as follows.
	\begin{align}
		\qquad S&=\left\{\left(\mu_1, \sigma_1\right):\left|\mu_1\right| \leq \sqrt{\gamma_{i,1} \Sigma_i y_{i}^2}, \mu_1^2+\sigma_1^2 \leq \gamma_{i,2} \Sigma_i y_{i}^2\right\} \nonumber\\&
		D_{\widetilde{s}_i}=\left\{\begin{array}{ll}
			&\mathbb{E}_{\mathbb{P}}[\widetilde{s}_i]^2 \leq \gamma_{i, 1} \Sigma_i, \\
			PD_{\widetilde{s}_i} \in \mathcal{P}(\mathbb{R}): &\\
			&\mathbb{E}_{\mathbb{P}}\left[{\widetilde{s}_i}^2\right] \leq \gamma_{i, 2} \Sigma_i \end{array}\right\}\nonumber\\&
		D_{\widetilde{\beta}_i}=\left\{\begin{array}{ll}
			&\left|\mathbb{E}_{\mathbb{P}}[\widetilde{\beta}_i]\right| \leq \sqrt{\gamma_{i,1}\Sigma_i y_{i}^2}, \\
			PD_{\widetilde{\beta}_i} \in \mathcal{P}(\mathbb{R}): &\\
			&\mathbb{E}_{\mathbb{P}}\left[{\widetilde{\beta}_i}^2\right] \leq \gamma_{i,2} \Sigma_i y_{i}^2 
   \end{array}\right\}\nonumber
	\end{align}
where $\mu_1$ and $\sigma_1 ^2$ denote the mean and variance of $\widetilde{\beta}_i$, respectively. Based on the above set, we rewrite the left side of DDCC \eqref{DDCC1} as 
	\begin{equation}
		\begin{aligned}\label{layer1}
			\inf _{PD_{\xi_i(x)} \in D_i(x)}& \mathbb{P}\left(\xi_i(x) \cdot y_i \leq b_i\right) \\ 
			&=\inf _{PD_{\widetilde{s}_i} \in D_{\widetilde{s}_i}} \mathbb{P}\{\widetilde{s}_i y_i \leq c_i\} \\ 
			&=\inf _{PD_{\widetilde{\beta}_i} \in D_{\widetilde{\beta}_i}} \mathbb{P}\{\widetilde{\beta}_i \leq c_i\}\\
			& =\inf_{\left(\mu_1, \sigma_1\right) \in S} \; \inf _{PD_{\widetilde{\beta}_i} \in D_{i,1}\left(\mu_1, \sigma_1^2\right)} \mathbb{P}\{\widetilde{\beta}_i \leq c_i\}
		\end{aligned}
	\end{equation}
Obviously, \eqref{layer1} is a two-level optimization problem. For the outer-level problem, we search for the worst variance and mean value. For the inner-level problem, we search for the worst probability boundary.
	
Applying Lemma \ref{lemma1}, the inner-level problem of \eqref{layer1} is solved as 
	\begin{equation}
		\inf _{PD_{\widetilde{\beta}_i} \in D_{i,1}\left(\mu_1, \sigma_1^2\right)} \mathbb{P}\{\widetilde{\beta}_i\leq c_i\}= \begin{cases}\frac{\left(c_i-\mu_1\right)^2}{\sigma_1^2+\left(c_i-\mu_1\right)^2} & \text { if } c_i \geq \mu_1 \\ 0 & \text { otherwise } \end{cases}
	\end{equation}
	
	\textcolor{black}{ Because the confidence level $1-\alpha_i$ is larger than 0, 
		the two-level problem \eqref{layer1} is equivalent to the following form. }
	\begin{equation}\label{layer2}
		\inf _{PD_{\xi_i(x)} \in D_i(x)} {\mathbb{P}}\left[
		\xi_i(x) \cdot y_i \leq b_i
		\right]=
		\inf _{\left(\mu_1, \sigma_1\right) \in S} \frac{\left(c_i-\mu_1\right)^2}{\sigma_1^2+\left(c_i-\mu_1\right)^2}
	\end{equation}
	
	Recalling the definition of $S$, the following results are obtained by solving problem \eqref{layer2}.
\begin{equation}
    \begin{aligned}\label{layer111}
        &\inf _{PD_{\xi_i(x)} \in D_i(x)} {\mathbb{P}}\left[
        \xi_i(x) \cdot y_i \leq b_i
        \right] \\&= 
        \left\{
        \begin{array}{ll}
            \frac{1}{\left(\frac{\sqrt{\gamma_{i,2}-\gamma_{i,1}}}{\frac{c_i}{\sqrt{\Sigma_i y_i^2}}-\sqrt{\gamma_{i,1}}}\right)^2+1}, & \text{if } \sqrt{\gamma_{i,1}} \leq \frac{c_i}{\sqrt{\Sigma_i y_i^2}} \leq \frac{\gamma_{i,2}}{\sqrt{\gamma_{i,1}}} \\ \\
            \frac{\left(\frac{c_i}{\sqrt{\Sigma_i y_i^2} }\right)^2-\gamma_{i,2}}{\left(\frac{c_i}{\sqrt{\Sigma_i y_i^2}}\right)^2}, & \text{if } \frac{c_i}{\sqrt{\Sigma_i y_i^2}}>\frac{\gamma_{i,2}}{\sqrt{\gamma_{i,1}}} \\ \\
            \text{infeasible}, & \text{if } \frac{c_i}{\sqrt{\Sigma_i y_i^2}}<\sqrt{\gamma_{i,1}}
        \end{array}
        \right.
    \end{aligned}
\end{equation}
	
Based on DDCC \eqref{DDCC1} and results in \eqref{layer111}, we can obtain a second-order cone constraint,
	\begin{equation} \label{eu111}
	\mu_i(x)y_i+l_i \sqrt{\Sigma_i y_i^2} \leq b_i
\end{equation}
where 
\begin{align*}
	l_i=\begin{cases}\sqrt{\gamma_{i,1}}+\sqrt{\left(\frac{1-\alpha_i}{\alpha_i}\right)\left(\gamma_{i,2}-\gamma_{i,1}\right)},
		& \text { when } \gamma_{i,1} / \gamma_{i,2} \leq \alpha_i \\ 
		\sqrt{\frac{\gamma_{i,2}}{\alpha_i}},
		& \text { when } \gamma_{i,1} / \gamma_{i,2}>\alpha_i \end{cases}
\end{align*}

	This completes the proof.
\end{proof}

\section*{References}
\normalem
\bibliographystyle{IEEEtran}

\begin{thebibliography}{10}
\providecommand{\url}[1]{#1}
\csname url@rmstyle\endcsname
\providecommand{\newblock}{\relax}
\providecommand{\bibinfo}[2]{#2}
\providecommand\BIBentrySTDinterwordspacing{\spaceskip=0pt\relax}
\providecommand\BIBentryALTinterwordstretchfactor{4}
\providecommand\BIBentryALTinterwordspacing{\spaceskip=\fontdimen2\font plus
\BIBentryALTinterwordstretchfactor\fontdimen3\font minus
  \fontdimen4\font\relax}
\providecommand\BIBforeignlanguage[2]{{%
\expandafter\ifx\csname l@#1\endcsname\relax
\typeout{** WARNING: IEEEtran.bst: No hyphenation pattern has been}%
\typeout{** loaded for the language `#1'. Using the pattern for}%
\typeout{** the default language instead.}%
\else
\language=\csname l@#1\endcsname
\fi
#2}}

\bibitem{stackelberg1952theory}
H.~v. Stackelberg \emph{et~al.}, ``Theory of the market economy,'' 1952.

\bibitem{tharakunnel2009single}
K.~Tharakunnel and S.~Bhattacharyya, ``Single-leader--multiple-follower games
  with boundedly rational agents,'' \emph{Journal of Economic Dynamics and
  Control}, vol.~33, no.~8, pp. 1593--1603, 2009.

\bibitem{8031035}
G.~El~Rahi, S.~R. Etesami, W.~Saad, N.~B. Mandayam, and H.~V. Poor, ``Managing
  price uncertainty in prosumer-centric energy trading: A prospect-theoretic
  stackelberg game approach,'' \emph{IEEE Transactions on Smart Grid}, vol.~10,
  no.~1, pp. 702--713, 2019.

\bibitem{XI2022}
\BIBentryALTinterwordspacing
H.~Xi, D.~Aussel, W.~Liu, S.~Waller, and D.~Rey, ``Single-leader multi-follower
  games for the regulation of two-sided mobility-as-a-service markets,''
  \emph{European Journal of Operational Research}, 2022. [Online]. Available:
  \url{https://www.sciencedirect.com/science/article/pii/S0377221722005161}
\BIBentrySTDinterwordspacing

\bibitem{zhang2022nash}
Y.~Zhang, F.~Liu, Z.~Wang, Y.~Chen, S.~Feng, Q.~Wu, and Y.~Hou, ``On
  nash--stackelberg--nash games under decision-dependent uncertainties: Model
  and equilibrium,'' \emph{Automatica}, vol. 142, p. 110401, 2022.

\bibitem{xu2005mpcc}
H.~Xu*, ``An mpcc approach for stochastic stackelberg--nash--cournot
  equilibrium,'' \emph{Optimization}, vol.~54, no.~1, pp. 27--57, 2005.

\bibitem{demiguel2009stochastic}
V.~DeMiguel and H.~Xu, ``A stochastic multiple-leader stackelberg model:
  analysis, computation, and application,'' \emph{Operations Research},
  vol.~57, no.~5, pp. 1220--1235, 2009.

\bibitem{aghassi2006robust}
M.~Aghassi and D.~Bertsimas, ``Robust game theory,'' \emph{Mathematical
  programming}, vol. 107, no.~1, pp. 231--273, 2006.

\bibitem{hu2013existence}
M.~Hu and M.~Fukushima, ``Existence, uniqueness, and computation of robust nash
  equilibria in a class of multi-leader-follower games,'' \emph{SIAM Journal on
  optimization}, vol.~23, no.~2, pp. 894--916, 2013.

\bibitem{wang2024existence}
M.~Wang, G.~Yang, G.~Wang, Z.~Liu, and C.~Wang, ``Existence of robust weak nash
  equilibria for leader-follower games with uncertainty,'' \emph{Annals of
  Operations Research}, pp. 1--14, 2024.

\bibitem{liu2018distributionally}
Y.~Liu, H.~Xu, S.-J.~S. Yang, and J.~Zhang, ``Distributionally robust
  equilibrium for continuous games: Nash and stackelberg models,''
  \emph{European Journal of Operational Research}, vol. 265, no.~2, pp.
  631--643, 2018.

\bibitem{singh2017distributionally}
V.~V. Singh, O.~Jouini, and A.~Lisser, ``Distributionally robust
  chance-constrained games: existence and characterization of nash
  equilibrium,'' \emph{Optimization Letters}, vol.~11, pp. 1385--1405, 2017.

\bibitem{fabiani2023distributionally}
F.~Fabiani and B.~Franci, ``On distributionally robust generalized nash games
  defined over the wasserstein ball,'' \emph{Journal of Optimization Theory and
  Applications}, vol. 199, no.~1, pp. 298--309, 2023.

\bibitem{nash1950non}
J.~F. Nash \emph{et~al.}, ``Non-cooperative games,'' 1950.

\bibitem{9983834}
Y.~Su, F.~Liu, Z.~Wang, Y.~Zhang, B.~Li, and Y.~Chen, ``Multi-stage robust
  dispatch considering demand response under decision-dependent uncertainty,''
  \emph{IEEE Transactions on Smart Grid}, vol.~14, no.~4, pp. 2786--2797, 2023.

\bibitem{webster2012approximate}
M.~Webster, N.~Santen, and P.~Parpas, ``An approximate dynamic programming
  framework for modeling global climate policy under decision-dependent
  uncertainty,'' \emph{Computational Management Science}, vol.~9, pp. 339--362,
  2012.

\bibitem{10506707}
J.~Wang, Z.~Wang, B.~Yang, F.~Liu, W.~Wei, and X.~Guan, ``V2g for frequency
  regulation service: a stackelberg game approach considering endogenous
  uncertainties,'' \emph{IEEE Transactions on Transportation Electrification},
  pp. 1--1, 2024.

\bibitem{cantelli1929sui}
F.~P. Cantelli, ``Sui confini della probabilita,'' in \emph{Atti del Congresso
  Internazionale dei Matematici: Bologna del 3 al 10 de settembre di 1928},
  1929, pp. 47--60.

\bibitem{jaubin1984differential}
P.~JAubin and A.~Cellina, ``Differential inclusions: Set-valued maps and
  viability theory,'' 1984.

\bibitem{kakutani1941generalization}
S.~Kakutani, ``A generalization of brouwer’s fixed point theorem,'' 1941.

\bibitem{charnes1959chance}
A.~Charnes and W.~W. Cooper, ``Chance-constrained programming,''
  \emph{Management science}, vol.~6, no.~1, pp. 73--79, 1959.

\bibitem{8782547}
O.~Ciftci, M.~Mehrtash, and A.~Kargarian, ``Data-driven nonparametric
  chance-constrained optimization for microgrid energy management,'' \emph{IEEE
  Transactions on Industrial Informatics}, vol.~16, no.~4, pp. 2447--2457,
  2020.

\bibitem{aigner2022robust}
K.-M. Aigner, J.-P. Clarner, F.~Liers, and A.~Martin, ``Robust approximation of
  chance constrained dc optimal power flow under decision-dependent
  uncertainty,'' \emph{European Journal of Operational Research}, vol. 301,
  no.~1, pp. 318--333, 2022.

\bibitem{yin2023chance}
W.~Yin, S.~Feng, R.-P. Liu, and Y.~Hou, ``Chance-constrained co-expansion
  planning for power systems under decision-dependent wind power uncertainty,''
  \emph{IET Renewable Power Generation}, vol.~17, no.~6, pp. 1342--1357, 2023.

\bibitem{10252038}
J.~Cao, B.~Yang, C.~Y. Chung, Y.~Gong, and X.~Guan, ``Distributionally robust
  management of hybrid energy station under exogenous-endogenous uncertainties
  and bounded rationality,'' \emph{IEEE Transactions on Sustainable Energy},
  pp. 1--18, 2023.

\bibitem{7332992}
M.~Lubin, Y.~Dvorkin, and S.~Backhaus, ``A robust approach to chance
  constrained optimal power flow with renewable generation,'' \emph{IEEE
  Transactions on Power Systems}, vol.~31, no.~5, pp. 3840--3849, 2016.

\bibitem{huang2006fuzzy}
X.~Huang, ``Fuzzy chance-constrained portfolio selection,'' \emph{Applied
  mathematics and computation}, vol. 177, no.~2, pp. 500--507, 2006.

\bibitem{elcci2018chance}
{\"O}.~El{\c{c}}i and N.~Noyan, ``A chance-constrained two-stage stochastic
  programming model for humanitarian relief network design,''
  \emph{Transportation research part B: methodological}, vol. 108, pp. 55--83,
  2018.

\bibitem{delage2010distributionally}
E.~Delage and Y.~Ye, ``Distributionally robust optimization under moment
  uncertainty with application to data-driven problems,'' \emph{Operations
  research}, vol.~58, no.~3, pp. 595--612, 2010.

\bibitem{zhan2016generation}
Y.~Zhan, Q.~P. Zheng, J.~Wang, and P.~Pinson, ``Generation expansion planning
  with large amounts of wind power via decision-dependent stochastic
  programming,'' \emph{IEEE Transactions on Power Systems}, vol.~32, no.~4, pp.
  3015--3026, 2016.

\bibitem{zhan2018multistage}
Y.~Zhan and Q.~P. Zheng, ``A multistage decision-dependent stochastic bilevel
  programming approach for power generation investment expansion planning,''
  \emph{IISE Transactions}, vol.~50, no.~8, pp. 720--734, 2018.

\bibitem{zhang2018ambiguous}
Y.~Zhang, R.~Jiang, and S.~Shen, ``Ambiguous chance-constrained binary programs
  under mean-covariance information,'' \emph{SIAM Journal on Optimization},
  vol.~28, no.~4, pp. 2922--2944, 2018.

\end{thebibliography}


\begin{thebibliography}{10}
\providecommand{\url}[1]{#1}
\csname url@rmstyle\endcsname
\providecommand{\newblock}{\relax}
\providecommand{\bibinfo}[2]{#2}
\providecommand\BIBentrySTDinterwordspacing{\spaceskip=0pt\relax}
\providecommand\BIBentryALTinterwordstretchfactor{4}
\providecommand\BIBentryALTinterwordspacing{\spaceskip=\fontdimen2\font plus
\BIBentryALTinterwordstretchfactor\fontdimen3\font minus
  \fontdimen4\font\relax}
\providecommand\BIBforeignlanguage[2]{{%
\expandafter\ifx\csname l@#1\endcsname\relax
\typeout{** WARNING: IEEEtran.bst: No hyphenation pattern has been}%
\typeout{** loaded for the language `#1'. Using the pattern for}%
\typeout{** the default language instead.}%
\else
\language=\csname l@#1\endcsname
\fi
#2}}

\bibitem{stackelberg1952theory}
H.~Stackelberg, \emph{Market structure and equilibrium}.\hskip 1em plus 0.5em
  minus 0.4em\relax Springer Science \& Business Media, 2010.

\bibitem{tharakunnel2009single}
K.~Tharakunnel and S.~Bhattacharyya, ``Single-leader--multiple-follower games
  with boundedly rational agents,'' \emph{Journal of Economic Dynamics and
  Control}, vol.~33, no.~8, pp. 1593--1603, 2009.

\bibitem{8031035}
G.~Rahi, S.~Etesami, W.~Saad, N.~Mandayam, and H.~V. Poor, ``Managing price
  uncertainty in prosumer-centric energy trading: A prospect-theoretic
  {S}tackelberg game approach,'' \emph{IEEE Transactions on Smart Grid},
  vol.~10, no.~1, pp. 702--713, 2019.

\bibitem{XI2022}
H.~Xi, D.~Aussel, W.~Liu, S.~Waller, and D.~Rey, ``Single-leader multi-follower
  games for the regulation of two-sided mobility-as-a-service markets,''
  \emph{European Journal of Operational Research}, vol. 317, no.~3, pp.
  718--736, 2024.

\bibitem{7956147}
H.~Kebriaei and L.~Iannelli, ``Discrete-time robust hierarchical
  linear-quadratic dynamic games,'' \emph{IEEE Transactions on Automatic
  Control}, vol.~63, no.~3, pp. 902--909, 2018.

\bibitem{xu2005mpcc}
H.~Xu, ``An {MPCC} approach for stochastic {S}tackelberg--{N}ash--{C}ournot
  equilibrium,'' \emph{Optimization}, vol.~54, no.~1, pp. 27--57, 2005.

\bibitem{demiguel2009stochastic}
V.~DeMiguel and H.~Xu, ``A stochastic multiple-leader {S}tackelberg model:
  analysis, computation, and application,'' \emph{Operations Research},
  vol.~57, no.~5, pp. 1220--1235, 2009.

\bibitem{aghassi2006robust}
M.~Aghassi and D.~Bertsimas, ``Robust game theory,'' \emph{Mathematical
  programming}, vol. 107, no.~1, pp. 231--273, 2006.

\bibitem{hu2013existence}
M.~Hu and M.~Fukushima, ``Existence, uniqueness, and computation of robust
  {N}ash equilibria in a class of multi-leader-follower games,'' \emph{SIAM
  Journal on optimization}, vol.~23, no.~2, pp. 894--916, 2013.

\bibitem{wang2024existence}
M.~Wang, G.~Yang, G.~Wang, Z.~Liu, and C.~Wang, ``Existence of robust weak
  {N}ash equilibria for leader-follower games with uncertainty,'' \emph{Annals
  of Operations Research}, pp. 1--14, 2024.

\bibitem{liu2018distributionally}
Y.~Liu, H.~Xu, S.~Yang, Shu, and J.~Zhang, ``Distributionally robust
  equilibrium for continuous games: Nash and {S}tackelberg models,''
  \emph{European Journal of Operational Research}, vol. 265, no.~2, pp.
  631--643, 2018.

\bibitem{singh2017distributionally}
V.~Singh, O.~Jouini, and A.~Lisser, ``Distributionally robust
  chance-constrained games: existence and characterization of {N}ash
  equilibrium,'' \emph{Optimization Letters}, vol.~11, pp. 1385--1405, 2017.

\bibitem{fabiani2023distributionally}
F.~Fabiani and B.~Franci, ``On distributionally robust generalized {N}ash games
  defined over the wasserstein ball,'' \emph{Journal of Optimization Theory and
  Applications}, vol. 199, no.~1, pp. 298--309, 2023.

\bibitem{9983834}
Y.~Su, F.~Liu, Z.~Wang, Y.~Zhang, B.~Li, and Y.~Chen, ``Multi-stage robust
  dispatch considering demand response under decision-dependent uncertainty,''
  \emph{IEEE Transactions on Smart Grid}, vol.~14, no.~4, pp. 2786--2797, 2023.

\bibitem{webster2012approximate}
M.~Webster, N.~Santen, and P.~Parpas, ``An approximate dynamic programming
  framework for modeling global climate policy under decision-dependent
  uncertainty,'' \emph{Computational Management Science}, vol.~9, pp. 339--362,
  2012.

\bibitem{10506707}
J.~Wang, Z.~Wang, B.~Yang, F.~Liu, W.~Wei, and X.~Guan, ``V2{G} for frequency
  regulation service: a {S}tackelberg game approach considering endogenous
  uncertainties,'' \emph{IEEE Transactions on Transportation Electrification},
  pp. 1--1, 2024.

\bibitem{zhang2022nash}
Y.~Zhang, F.~Liu, Z.~Wang, Y.~Chen, S.~Feng, Q.~Wu, and Y.~Hou, ``On
  {N}ash--{S}tackelberg--{N}ash games under decision-dependent uncertainties:
  {M}odel and equilibrium,'' \emph{Automatica}, vol. 142, p. 110401, 2022.

\bibitem{xie2024distributionally}
R.~Xie and W.~Wei, \emph{Distributionally Robust Optimization and its
  Applications in Power System Energy Storage Sizing}.\hskip 1em plus 0.5em
  minus 0.4em\relax Springer Nature, 2024.

\bibitem{nishimura2009robust}
R.~Nishimura, S.~Hayashi, and M.~Fukushima, ``Robust nash equilibria in
  n-person non-cooperative games: uniqueness and reformulation,'' \emph{Pacific
  Journal of Optimization}, vol.~5, no.~2, pp. 237--259, 2009.

\bibitem{jia2015existence}
W.~Jia, S.~Xiang, J.~He, and Y.~Yang, ``Existence and stability of weakly
  pareto-nash equilibrium for generalized multiobjective multi-leader--follower
  games,'' \emph{Journal of Global Optimization}, vol.~61, pp. 397--405, 2015.

\bibitem{jaubin1984differential}
J.~Aubin and A.~Cellina, ``Differential inclusions: Set-valued maps and
  viability theory,'' \emph{Berlin Heidelberg, Germany: Springer}, 1984.

\bibitem{cantelli1929sui}
F.~Cantelli, ``Sui confini della probabilita,'' in \emph{Atti del Congresso
  Internazionale dei Matematici: Bologna del 3 al 10 de settembre di 1928},
  1929, pp. 47--60.

\bibitem{kakutani1941generalization}
S.~Kakutani, ``A generalization of brouwer’s fixed point theorem,''
  \emph{Duke Math. J.}, vol.~8, no.~3, pp. 457--459, 1941.

\bibitem{luo2020distributionally}
F.~Luo and S.~Mehrotra, ``Distributionally robust optimization with decision
  dependent ambiguity sets,'' \emph{Optimization Letters}, vol.~14, no.~8, pp.
  2565--2594, 2020.

\bibitem{yu2022multistage}
X.~Yu and S.~Shen, ``Multistage distributionally robust mixed-integer
  programming with decision-dependent moment-based ambiguity sets,''
  \emph{Mathematical Programming}, vol. 196, no.~1, pp. 1025--1064, 2022.

\bibitem{delage2010distributionally}
E.~Delage and Y.~Ye, ``Distributionally robust optimization under moment
  uncertainty with application to data-driven problems,'' \emph{Operations
  Research}, vol.~58, no.~3, pp. 595--612, 2010.

\bibitem{10252038}
J.~Cao, B.~Yang, C.~Y. Chung, Y.~Gong, and X.~Guan, ``Distributionally robust
  management of hybrid energy station under exogenous-endogenous uncertainties
  and bounded rationality,'' \emph{IEEE Transactions on Sustainable Energy},
  vol.~15, no.~2, pp. 884--902, 2024.

\bibitem{basciftci2021distributionally}
B.~Basciftci, S.~Ahmed, and S.~Shen, ``Distributionally robust facility
  location problem under decision-dependent stochastic demand,'' \emph{European
  Journal of Operational Research}, vol. 292, no.~2, pp. 548--561, 2021.

\bibitem{10038580}
S.~Ramyar, M.~Tanaka, A.~L. Liu, and Y.~Chen, ``Endogenous risk management of
  prosumers by distributionally robust chance-constrained optimization,''
  \emph{IEEE Transactions on Energy Markets, Policy and Regulation}, vol.~1,
  no.~1, pp. 48--59, 2023.

\bibitem{9424958}
F.~Fabiani, M.~A. Tajeddini, H.~Kebriaei, and S.~Grammatico, ``Local
  stackelberg equilibrium seeking in generalized aggregative games,''
  \emph{IEEE Transactions on Automatic Control}, vol.~67, no.~2, pp. 965--970,
  2022.

\bibitem{8998158}
M.~Shokri and H.~Kebriaei, ``Leader–follower network aggregative game with
  stochastic agents’ communication and activeness,'' \emph{IEEE Transactions
  on Automatic Control}, vol.~65, no.~12, pp. 5496--5502, 2020.

\bibitem{10493142}
Y.~Wang and A.~Nedić, ``Robust constrained consensus and
  inequality-constrained distributed optimization with guaranteed differential
  privacy and accurate convergence,'' \emph{IEEE Transactions on Automatic
  Control}, pp. 1--16, 2024.

\bibitem{kreps2013microeconomic}
D.~Kreps, \emph{Microeconomic foundations}.\hskip 1em plus 0.5em minus
  0.4em\relax Princeton {U}niversity {P}ress, 2013, vol.~1.

\bibitem{zhang2018ambiguous}
Y.~Zhang, R.~Jiang, and S.~Shen, ``Ambiguous chance-constrained binary programs
  under mean-covariance information,'' \emph{SIAM Journal on Optimization},
  vol.~28, no.~4, pp. 2922--2944, 2018.

\end{thebibliography}

\end{document}